\newtheorem{theorem}{Theorem}
\newcommand{\la}{\lambda}
\newcommand{\La}{\Lambda}
\newcommand{\ris}{\mathrm{ris}}
\newcommand{\risT}{\mathrm{risT}}
\newcommand{\risB}{\mathrm{risB}}
\newcommand{\risL}{\mathrm{risL}}
\newcommand{\risA}{\mathrm{risA}}
\newcommand{\nth}[1][n]{{#1}^{\mathrm{th}}}
\newcommand{\sg}{\sigma}
\newcommand{\cref}[1]{Corollary \ref{corollary:#1}}
\newcommand{\red}{\mathrm{red}}
\author{Jeffrey Remmel\affiliationmark{1}\thanks{Email: remmel@math.ucsd.edu}
  \and Sainan Zheng\affiliationmark{2}\thanks{Corresponding author. Email: zhengsainandlut@hotmail.com}}
\title[Rises in forests of binary shrubs]{Rises in forests of binary shrubs
\footnote{The second author would like to thank the China Scholarship Council for financial
support. Her work was done during her visit to the Department of Mathematics,
University of California, San Diego.}}
\affiliation{
  Department of Mathematics, University of California, San Diego, USA\\
  School of Mathematical Sciences, Dalian University of Technology, PR China}
\keywords{shrub, rise, generating function, symmetric function}
\begin{document}
\publicationdetails{19}{2017}{1}{15}{2561}
\maketitle
\begin{abstract}
\noindent
The study of patterns in permutations associated with
forests of binary shrubs was initiated
by Bevan, Levin, Nugent, Pantone, Pudwell, Riehl, and Tlachac.
In this paper, we study five different types of rise statistics
that can be associated with such permutations and
find the generating functions for the distribution of such rise statistics.
\end{abstract}

\section{Introduction}

In \cite{BLNPPRT},  the study of patterns in forests of binary shrubs was introduced.
A $k$-ary heap $H$ is a $k$-ary tree labeled
with $\{1, \ldots, n\}$ such that every child has a larger label
than its parent.  Given a $k$-ary heap $H$, we associate
a permutation $\sg_H$ with $H$ by recording the vertex labels as
they are encountered  in the breadth-first search of the tree.
For example, in Figure~\ref{fig:3Heap}, we picture a 3-ary heap $H$ whose
associated permutation is $\sg_H = 1~6~2~3~7~10~8~9~5~4$.

\begin{figure}[htbp]
  \begin{center}
    \includegraphics[width=0.4\textwidth,height=2cm]{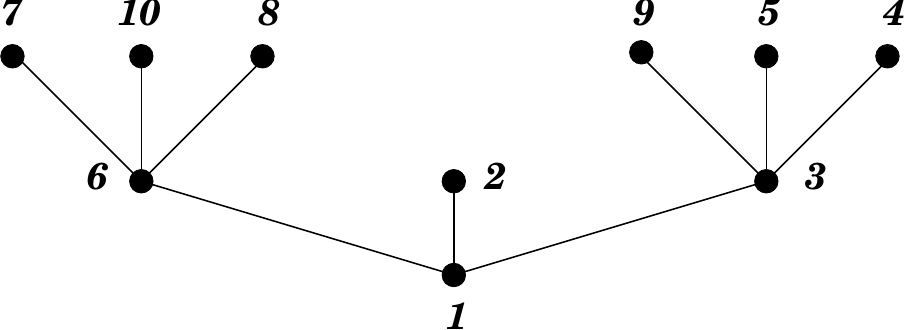}
    \caption{A 3-ary Heap.}
    \label{fig:3Heap}
  \end{center}
\end{figure}

A shrub is a heap whose leaves are all at most distance 1 from
the root.  A binary shrub is a heap whose underlying tree is
a shrub with three vertices. A binary shrub forest is
an ordered sequence of binary shrubs and we let
$\mathcal{F}_n^2$ denote the set of all forests
$F=(F_1, \ldots, F_n)$ of $n$ binary
shrubs whose set of labels is $\{1, \ldots, 3n\}$. For example,
in Figure \ref{fig:Fschrub}, we picture an element
of $\mathcal{F}_5^2$. Given a forest
$F =(F_1, \ldots,F_n) \in \mathcal{F}^2_n$,
we let $\sg_F$ denote the permutation that results by concatenating
the permutations $\sg_{F_1} \ldots \sg_{F_n}$. For example,
the permutation $\sg_F$ for the $F \in \mathcal{F}_5^2$ pictured
in Figure~\ref{fig:Fschrub} is
$$\sg_F = 5~12~9~6~13~15~1~4~10~7~11~8~2~14~3.$$
For any $n \geq 1$, we let $\mathcal{SF}_n^2$ denote the set of all
$\sg_F$ such that $F \in \mathcal{F}_n^2$.

\begin{figure}[htbp]
  \begin{center}
    \includegraphics[width=0.5\textwidth,height=1.5cm]{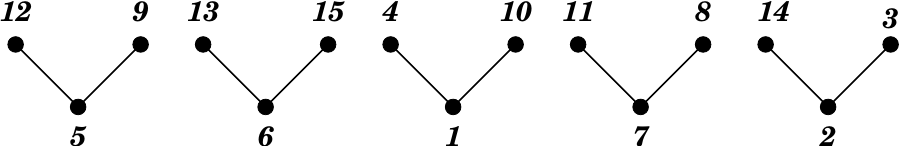}
    \caption{An element of $\mathcal{F}_n^2$.}
    \label{fig:Fschrub}
  \end{center}
\end{figure}

The goal of this paper is to study generating functions
for various types of rises  in $\mathcal{SF}_n^2$. For example,
given a permutation $\sg = \sg_1 \cdots \sg_n$ in the symmetric
group $S_n$, we let
$$Rise(\sg) = \{i:\sg_i < \sg_{i+1}\} \ \mbox{and} \ \ris(\sg) = |Rise(\sg)|.$$
For any sequence $\vec{a} = a_1 \cdots a_n$ of pairwise distinct positive integers,
we let the {\em reduction} of $\vec{a}$, $\red(\vec{a})$, be the permutation of $S_n$ that arises
from $\vec{a}$ by replacing $i^{\mathrm{th}}$-smallest element
of $\{a_1, \ldots, a_n\}$ by $i$. For example,
$\red(7~9~4~2~10) = 3~4~2~1~5$.

Now suppose that we are given $F =(F_1, \ldots,F_n) \in \mathcal{F}^2_n$,
then we let $\ris(F) = \ris(\sg_F)$. However, given
the structure of $F$, there are many other natural notions of rises
in  a forest of binary shrubs. That is, suppose
that $\sg_{F_i} = abc$ and $\sg_{F_{i+1}} = def$ as pictured in Figure
\ref{fig:2schrubs}.  Then we shall consider the following four types
of rises.
\begin{enumerate}
\item $F_i <_T F_{i+1}$ if every element of $\{a,b,c\}$ is less
than every element of $\{d,e,f\}$. We will refer to this type of
rise as {\em total rise}.
\item $F_i <_B F_{i+1}$ if $a < d$. We will refer to this type of
rise as {\em base rise}.
\item $F_i <_L F_{i+1}$ if $a < d$, $b < e$, and $c < f$.
We will refer to this type of rise as {\em lexicographic rise}.
\item $F_i <_A F_{i+1}$ if $c < e$. We refer to this type of rise
as an {\em adjacent rise} because when we look at the pictures of
$F_i$ and $F_{i+1}$, the rightmost element of $F_i$ is less then the leftmost element
of $F_{i+1}$.
\end{enumerate}
Then we define

\begin{xalignat*}{2}
RiseT(F) &= \{i:F_i <_T F_{i+1}\}&  \risT(F) &= |RiseT(F)|, \\
RiseB(F) &= \{i:F_i <_B F_{i+1}\}&  \risB(F) &= |RiseB(F)|, \\
RiseL(F) &= \{i:F_i <_L F_{i+1}\}& \risL(F) &= |RiseL(F)|, \ \mbox{and} \\
RiseA(F) &= \{i:F_i <_A F_{i+1}\}&  \risA(F) &= |RiseA(F)|.
\end{xalignat*}

\begin{figure}[htbp]
  \begin{center}
    \includegraphics[width=0.3\textwidth,height=1.5cm]{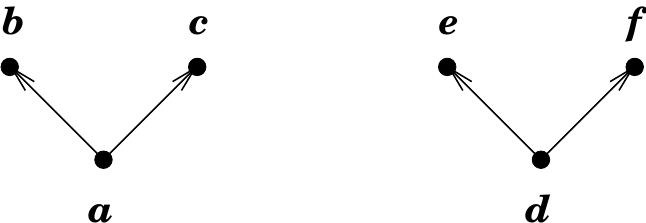}
    \caption{Two consecutive binary shrubs.}
    \label{fig:2schrubs}
  \end{center}
\end{figure}

The goal of this paper is to study the following generating functions.

\begin{eqnarray*}
\mathcal{R}(x,t)  &=& 1 + \sum_{n \geq 1} \frac{t^{3n}}{(3n!)}
\sum_{\sg \in \mathcal{SF}_n^2} x^{\ris(\sg)},\\
\mathcal{RT}(x,t)  &=& 1 + \sum_{n \geq 1} \frac{t^{3n}}{(3n!)}
\sum_{F \in \mathcal{F}_n^2} x^{\risT(F)}, \\
\mathcal{RB}(x,t)  &=& 1 + \sum_{n \geq 1} \frac{t^{3n}}{(3n!)}
\sum_{F \in \mathcal{F}_n^2} x^{\risB(F)}, \\
\mathcal{RL}(x,t)  &=& 1 + \sum_{n \geq 1} \frac{t^{3n}}{(3n!)}
\sum_{F \in \mathcal{F}_n^2} x^{\risL(F)}, \mbox{and} \\
\mathcal{RA}(x,t)  &=& 1 + \sum_{n \geq 1} \frac{t^{3n}}{(3n!)}
\sum_{F \in \mathcal{F}_n^2} x^{\risA(F)}.
\end{eqnarray*}
For example, we shall prove that
\begin{equation}\label{risgf}
\mathcal{R}(x,t) = \frac{1-x}{1-x + \sum_{n \geq 1}
\frac{(x(x-1)t^3)^n}{(3n)!} \prod_{k=1}^n (x+3k-2)}.
\end{equation}

To compute the remaining generating functions, we will need
find explicit formulas for the number of increasing binary
shrub forests relative to the orderings $<_T, <_B, <_L$ and
$<_A$. For $Z \in \{T,B,L,A\}$, we let
\begin{eqnarray*}
\mathcal{IZF}_n^2 &=& \{(F_1, \ldots, F_n) \in \mathcal{F}_n^2:
F_1 <_Z F_2 <_Z \cdots <_Z F_n\}, \\
\mathrm{IZF}_n^2 &=& |\mathcal{IZF}_n^2|, \ \mbox{and} \\
\mathcal{IZSF}_n^2 &=& \{\sg_F: F \in \mathcal{IZF}_n^2\}.
\end{eqnarray*}
Then  for $Z \in \{T,B,L,A\}$, we shall show that
\begin{eqnarray}\label{eq:Z}
\mathcal{RZ}(x,t) &=& 1 + \sum_{n \geq 1} \frac{t^{3n}}{(3n)!}
\sum_{F \in \mathcal{F}_n^2} x^{\mathrm{risZ}(F)} \nonumber \\
&=& \frac{1}{1- \sum_{n \geq 1} \frac{t^{3n}}{(3n)!}(x-1)^{n-1}
\mathrm{IZF}_n^2}.
\end{eqnarray}
Thus to find the generating functions
$\mathcal{RT}(x,t)$, $\mathcal{RB}(x,t)$, $\mathcal{RL}(x,t)$, and
 $\mathcal{RA}(x,t)$, we need only compute $\mathrm{ITF}_n^2$,
$\mathrm{IBF}_n^2$, $\mathrm{ILF}_n^2$,  and $\mathrm{IAF}_n^2$.
We shall show that
\begin{eqnarray*}
\mathrm{ITF}_n^2 &=& 2^n, \\
\mathrm{IBF}_n^2 &=& \frac{(3n)!}{3^n n!}, \ \mbox{and} \\
\mathrm{ILF}_n^2 &=& \frac{4^n (3n)!}{(n+1)! (2n+1)!}.
\end{eqnarray*}
Of these three formulas, the most interesting is the formula
for $\mathrm{ILF}_n^2$ which equals the number of paths of length $n$
in the plane that start and end at the origin and
which stay in the first quadrant
that consists only of steps of the form
$(1,1)$, $(0,-1)$ and $(-1,0)$.  This number was first computed
by Kreweras, see \cite{K65}.  We shall prove our formula by providing
a bijection between $\mathcal{ILF}^2_n$ and the collection of such paths.
We have not been able to find an explicit formula for $\mathrm{IAF}_n^2$, but we shall show that we can develop a system of recurrences that will allow
us to compute $\mathrm{IAF}_n^2$.

The main tool that we will use to compute these generating functions
is the homomorphism method as described in \cite{MR}.  The homomorphism method derives generating functions for
various permutation statistics by
applying a ring homomorphism defined on the
ring of symmetric functions \begin{math}\Lambda\end{math}
in infinitely many variables \begin{math}x_1,x_2, \ldots \end{math}
to simple symmetric function identities such as
\begin{equation}\label{conclusion2}
H(t) = 1/E(-t)
\end{equation}
where $H(t)$ and $E(t)$ are the generating functions for the homogeneous and elementary
symmetric functions, respectively:
\begin{equation}\label{genfns}
H(t) = \sum_{n\geq 0} h_n t^n = \prod_{i\geq 1} \frac{1}{1-x_it},~~~~ E(t) = \sum_{n\geq 0} e_n t^n = \prod_{i\geq 1} 1+x_it.
\end{equation}

The outline of the this paper is as follows. First in Section 2,
we shall briefly review the background on symmetric functions that we need.
In Section 3, we shall prove (\ref{risgf}).
In Section 4, we shall prove (\ref{eq:Z}). In Section 5, we
will compute $\mathrm{ITF}_n^2$,
$\mathrm{IBF}_n^2$, $\mathrm{ILF}_n^2$,  and $\mathrm{IAF}_n^2$ which, when
combined with the results of Section 4,  will allow us to compute
the generating functions
$\mathcal{RT}(x,t)$, $\mathcal{RB}(x,t)$, $\mathcal{RL}(x,t)$, and
$\mathcal{RA}(x,t)$.

\section{Symmetric functions}

In this section, we give the necessary background on symmetric functions that will be used in our proofs.

A partition of $n$ is a sequence of positive integers \begin{math}\la = (\la_1, \ldots ,\la_k)\end{math} such that \begin{math}0 < \la_1 \leq \cdots \leq \la_k\end{math} and $n=\la_1+ \cdots +\la_k$. We shall write $\lambda \vdash n$ to denote that $\lambda$ is partition of $n$ and we let $\ell(\lambda)$ denote the number of parts of $\lambda$. When a partition of $n$ involves repeated parts, we shall often use exponents in the partition notation to indicate these repeated parts. For example, we will write $(1^2,4^5)$ for the partition $(1,1,4,4,4,4,4)$.

Let \begin{math}\Lambda\end{math} denote the ring of symmetric functions in infinitely  many variables \begin{math}x_1,x_2, \ldots \end{math}. The \begin{math}\nth\end{math} elementary symmetric function \begin{math}e_n = e_n(x_1,x_2, \ldots )\end{math}  and \begin{math}\nth\end{math} homogeneous symmetric function \begin{math}h_n = h_n(x_1,x_2, \ldots )\end{math} are defined by the generating functions given in (\ref{genfns}). For any partition \begin{math}\la = (\la_1,\dots,\la_\ell)\end{math}, let \begin{math}e_\la = e_{\la_1} \cdots e_{\la_\ell}\end{math} and \begin{math}h_\la = h_{\la_1} \cdots h_{\la_\ell}\end{math}.  It is well known that \begin{math}e_0,e_1, \ldots \end{math} is an algebraically independent set of generators for \begin{math}\La\end{math}, and hence, a ring homomorphism \begin{math}\theta\end{math} on \begin{math}\Lambda\end{math} can be defined  by simply specifying \begin{math}\theta(e_n)\end{math} for all \begin{math}n\end{math}.

If $\lambda =(\lambda_1, \ldots, \lambda_k)$ is a partition of $n$, then a $\lambda$-brick tabloid of shape $(n)$ is a filling of a rectangle consisting of $n$ cells with bricks of sizes $\lambda_1, \ldots, \lambda_k$ in such a way that no two bricks overlap. For example, Figure \ref{fig:DIMfig1} shows the six $(1^2,2^2)$-brick tabloids of shape $(6)$.

\begin{figure}[htbp]
  \begin{center}
    \includegraphics[width=0.5\textwidth,height=1.5cm]{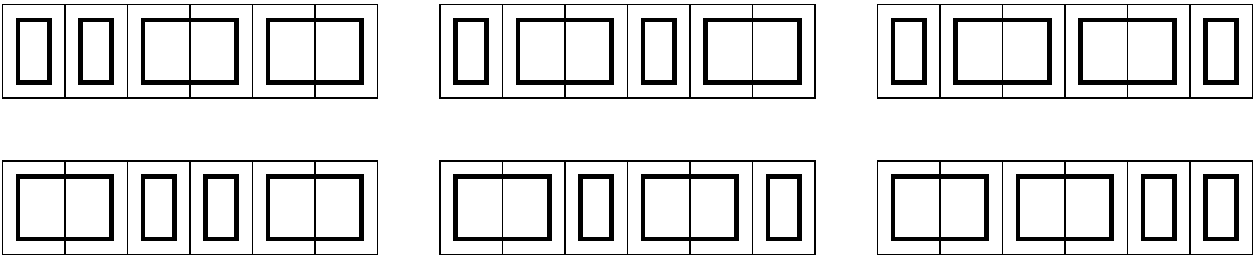}
    \caption{The six $(1^2,2^2)$-brick tabloids of shape $(6)$.}
    \label{fig:DIMfig1}
  \end{center}
\end{figure}

Let \begin{math}\mathcal{B}_{\la,n}\end{math} denote the set of \begin{math}\la\end{math}-brick tabloids of shape \begin{math}(n)\end{math} and let \begin{math}B_{\la,n}\end{math} be the number of \begin{math}\la\end{math}-brick tabloids of shape \begin{math}(n)\end{math}.  If \begin{math}B \in \mathcal{B}_{\la,n}\end{math}, we will write \begin{math}B =(b_1, \ldots, b_{\ell(\la)})\end{math} if the lengths of the bricks in \begin{math}B\end{math}, reading from left to right, are \begin{math}b_1, \ldots, b_{\ell(\la)}\end{math}. For example, the brick tabloid in the top right position in Figure \ref{fig:DIMfig1} is denoted as $(1,2,2,1)$. In \cite{Eg1} it has been proved that
\begin{equation}\label{htoe}
h_n = \sum_{\la \vdash n} (-1)^{n - \ell(\la)} B_{\la,n}~ e_\la.
\end{equation}

\section{The generating function \texorpdfstring{$\mathcal{R}(x,t)$}{}.}

It this section, we shall prove the following theorem.
\begin{theorem}\label{thm:Ris}
\begin{equation}\label{eq:Ris}
\mathcal{R}(x,t) = 1 + \sum_{n \geq 1} \frac{t^{3n}}{(3n)!}
\sum_{\sg \in \mathcal{SF}_n^2} x^{\ris(\sg)} =
\frac{1-x}{1-x + \sum_{n \geq 1}
\frac{(x(x-1)t^3)^n}{(3n)!} \prod_{k=1}^n (x+3k-2)}.
\end{equation}
\end{theorem}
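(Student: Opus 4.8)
The plan is to apply the homomorphism method. I will work with the generating function over permutations in $\mathcal{SF}_n^2$ weighted by $x^{\ris(\sg)}$, but it will be cleaner to track the complementary statistic: since $\sg$ has $3n$ letters, $\ris(\sg) = 3n - 1 - \des(\sg)$ when $n \geq 1$, so I will instead set up a ring homomorphism $\theta$ on $\Lambda$ designed so that, after multiplying $h_{3n}$ by $(3n)!$, the image encodes the sum $\sum_{\sg \in \mathcal{SF}_n^2} x^{\ris(\sg)}$ up to the appropriate normalization. Concretely, following the standard recipe in \cite{MR}, I would define $\theta$ on the power-sum-free generators by specifying
\begin{equation*}
\theta(e_n) = \frac{(-1)^{n-1}}{(3n)!}\,(x-1)^{n-1}\,c_n
\end{equation*}
for a suitable coefficient $c_n$, then use \eref{htoe} to expand $\theta(h_{3n})$ as a sum over brick tabloids, interpreting each brick of size $b$ as a maximal run of consecutive shrub-blocks that are ``glued'' together without a forced descent between them.

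The key combinatorial input is counting, for a fixed block structure of $F \in \mathcal{F}_n^2$, the ways to place the labels $\{1,\dots,3n\}$ so that within each brick the relevant descent pattern is prescribed. Here the relevant object is a single binary shrub: a shrub on a $3$-element label set $\{p<q<r\}$ has the root as the minimum $p$, and then the two children can be arranged in $2$ ways, giving the two permutations $pqr$ and $prq$. Reading a forest left to right and recording where a shrub-boundary forces an ascent versus leaves a choice produces, after the usual sign-reversing involution argument that cancels the $(-1)$'s coming from \eref{htoe}, a product formula. I expect the per-brick count of size $n$ (i.e.\ a run of $n$ shrubs) to be governed by the product $\prod_{k=1}^{n}(x + 3k - 2)$, which is exactly the factor appearing in \eref{eq:Ris}; the $3k-2$ arises because inserting the $k$-th shrub's three labels into an increasing context contributes positions in a way that telescopes against the $3n$ available slots, while the additive $x$ tracks the rise/non-rise choice at each internal shrub of the run.

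The mechanics then run as follows. First I establish the single-shrub and single-run weight identities described above. Second I assemble these via \eref{htoe} into an expression for $(3n)!\,\theta(h_{3n})$ as a sum over compositions of $n$, where the part corresponding to a run of length $\ell$ contributes $\frac{(x(x-1)t^3)^\ell}{(3\ell)!}\prod_{k=1}^\ell (x+3k-2)$ together with the bookkeeping sign, and the sign-reversing involution on ``bad'' bricks collapses the alternating sum into the geometric-series shape. Third I apply $\theta$ to the identity $H(t)=1/E(-t)$ from \eref{conclusion2}: since $\sum_{n\geq 0}\theta(h_n)t^n = 1/\sum_{n\geq 0}\theta(e_n)(-t)^n$, and $\theta$ is designed so that $\theta(e_n)$ vanishes unless the degree is a multiple of $3$ in the relevant variable (i.e.\ the generating variable is really $t^3$), the right-hand side becomes
\begin{equation*}
\frac{1}{1 - \sum_{n\geq 1} \frac{(x(x-1)t^3)^n}{(3n)!}\prod_{k=1}^n(x+3k-2)\,\big/\,(1-x)}
= \frac{1-x}{1-x+\sum_{n\geq 1}\frac{(x(x-1)t^3)^n}{(3n)!}\prod_{k=1}^n(x+3k-2)},
\end{equation*}
which is the claimed formula once one checks that $\sum_n \theta(h_{3n})t^{3n}$ equals $\mathcal{R}(x,t)$.

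The main obstacle I anticipate is the precise identification of the per-run weight, i.e.\ proving that a maximal increasing run of $\ell$ binary shrubs, when counted with the $x$-statistic for internal rises and the exponential $\frac{1}{(3\ell)!}$ normalization, produces exactly $\prod_{k=1}^{\ell}(x+3k-2)$. This requires a careful insertion argument: building the run one shrub at a time from the largest labels down (or equivalently from the smallest up), tracking at each step how many of the $3k-2$ ``allowed'' gap-positions receive the new shrub's labels and how the rise/non-rise toggle at the new boundary interacts with the two internal orderings $pqr$ versus $prq$ of the freshly inserted shrub. Everything else — the brick-tabloid expansion, the sign-reversing involution, and the final algebraic simplification of $1/E(-t)$ — is routine once this weight is pinned down.
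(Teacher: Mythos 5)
Your overall plan is the same as the paper's: define $\theta$ on $\Lambda$ with $\theta(e_{3n})$ proportional to $(x-1)^{n-1}$ times a per-run weight, expand $\theta(h_{3n})$ over brick tabloids via \eref{htoe}, cancel with a split/combine sign-reversing involution, and finish with $H(t)=1/E(-t)$. However, there is a genuine gap: the central combinatorial lemma, which you yourself flag as the ``main obstacle,'' is only conjectured, not proved. Concretely, you never fix what the content of a brick of size $3\ell$ is, and you never prove that its weighted count is $x^{\ell}\prod_{k=1}^{\ell}(x+3k-2)$. The correct specification is that a brick holds (the reduction of) a forest $(F_1,\dots,F_\ell)$ whose associated word $\sg_1\cdots\sg_{3\ell}$ satisfies $\sg_{3i}<\sg_{3i+1}$ for $1\le i\le \ell-1$, i.e.\ the rightmost leaf of each shrub is smaller than the root of the next shrub, with $x$ recording rises in positions $\equiv 1,2 \pmod 3$; the positions $\equiv 0\pmod 3$ inside a brick carry the $x$/$-1$ labels from $(x-1)^{\ell-1}$. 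This choice is not cosmetic: it is exactly what makes the fixed points of the involution (bricks ending precisely at descents $\sg_{3i}>\sg_{3i+1}$) have weight $x^{\ris(\sg)}$, and a different linking convention (say root-to-root, or all-of-$F_i$ less than all-of-$F_{i+1}$) would give a different per-run count. The lemma itself has a short proof you did not supply: viewing the linked run as a poset, the label $1$ is forced onto the root of the first shrub (giving a factor $x$ for the rise $\sg_1<\sg_2$); then either the left leaf of the first shrub gets the next-smallest label, forcing the right leaf and producing another rise (factor $x$), or the left leaf gets any of the $3\ell-2$ larger labels and the right leaf is forced to be the next-smallest label, producing a non-rise; the remaining labels are unconstrained relative to the first shrub, so induction gives $x(x+3\ell-2)\cdot x^{\ell-1}\prod_{k=1}^{\ell-1}(x+3k-2)$. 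Without this argument, your ``insertion'' heuristic does not yet constitute a proof.

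Two smaller points. First, the opening remark that you will track the complementary statistic $\des$ via $\ris(\sg)=3n-1-\des(\sg)$ is never used and only muddies the setup; the paper (and your own later steps) work directly with $\ris$. Second, your final display has a sign slip: with $\theta(e_{3n})=\frac{(-1)^{3n-1}}{(3n)!}x^n(x-1)^{n-1}\prod_{k=1}^n(x+3k-2)$ one gets $\theta(E(-t)) = 1+\frac{1}{1-x}\sum_{n\ge 1}\frac{(x(x-1)t^3)^n}{(3n)!}\prod_{k=1}^n(x+3k-2)$, i.e.\ the sum enters with a plus sign inside the reciprocal, whereas you wrote $1-\sum(\cdots)/(1-x)$, which would yield $\frac{1-x}{1-x-\sum(\cdots)}$ rather than the claimed $\frac{1-x}{1-x+\sum(\cdots)}$. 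This is easily repaired once the homomorphism is written down explicitly, but as displayed the equality is false.
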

\begin{proof}
Let $\mathbb{Q}[x]$ denote the polynomial ring over the rational
numbers $\mathbb{Q}$.

Let $\theta:\Lambda \rightarrow \mathbb{Q}[x]$ be the ring homomorphism defined on the ring of symmetric
functions $\Lambda$ in infinitely many variables determined by
setting $\theta(e_0) =1$, $\theta(e_{3n+1}) = \theta(e_{3n+2}) =0$
for all $n \geq 0$, and
$$\theta(e_{3n}) = \frac{(-1)^{3n-1}}{(3n)!} x^n (x-1)^{n-1}
\prod_{k=1}^n (x+3k-2)$$
for all $n \geq 1$.
We claim that for $n \geq 0$, $\theta(h_{3n+1}) = \theta(h_{3n+2}) =0$
and that for $n \geq 1$,
\begin{equation}\label{eq:ris1}
(3n)!\theta(h_{3n}) = \sum_{\sg \in \mathcal{SF}_n^2} x^{\ris(\sg)}.
\end{equation}

First it is easy to see that our definitions ensure that
$\theta(e_{\lambda}) =0$ if $\lambda$ has a part which
is equivalent to either 1 or 2 mod 3.
Since
\begin{equation}\label{eq:ris2}
h_n = \sum_{\lambda \vdash n} (-1)^{n - \ell(\lambda)}
B_{\lambda,n} e_{\lambda},
\end{equation}
it follows that $\theta(h_n) =0$ if $n$ is equivalent to 1 or 2 mod 3
since every partition of $\lambda$ of $n$ must contain a
part which is equivalent to 1 or 2 mod 3. If $\lambda = (\lambda_1,
\ldots, \lambda_k)$ is a partition of $n$, we let
$3 \lambda$ denote the partition $(3\lambda_1, \ldots, 3\lambda_k)$.
It follows that in the expansion $\theta(h_{3n})$, we need only
consider partitions $\lambda$ of $3n$ of the form
$3\mu$ where $\mu$ is a partition of $n$. Thus
\begin{eqnarray}\label{eq:ris3}
&&(3n)!\theta(h_{3n}) = (3n)! \sum_{\mu \vdash n}
(-1)^{3n -\ell(\mu)} B_{3\mu,3n} \theta(e_{3\mu}) = \nonumber \\
&&(3n)! \sum_{\mu \vdash n}
(-1)^{3n -\ell(\mu)}
\sum_{(3b_1, \ldots, 3b_{\ell(\mu)}) \in \mathcal{B}_{3\mu,3n}}
\prod_{i=1}^{\ell(\mu)} \frac{(-1)^{3b_i -1}}{(3b_i)!}
x^{b_i}(x-1)^{b_i-1} \prod_{k_i=1}^{b_i}(x+3k_i-2) = \nonumber \\
&&\sum_{\mu \vdash n}
\sum_{(3b_1, \ldots, 3b_{\mu}) \in \mathcal{B}_{3\mu,3n}}
\binom{3n}{3b_1, \ldots, 3b_{\ell(\mu)}} \prod_{i=1}^{\ell(\mu)}
x^{b_i}(x-1)^{b_i-1} \prod_{k_i=1}^{b_i}(x+3k_i-2).
\end{eqnarray}

Next our goal is to give a combinatorial interpretation for the right-hand
side of (\ref{eq:ris3}). Our combinatorial interpretation
will use a certain subset of permutations which are {\em increasing}
in a relevant way for our problem. In particular,
we let $\mathcal{ISF}_{n}^2$ equal the set of
all permutations
$\sg = \sg_1 \cdots \sg_{3n} \in \mathcal{SF}_{n}^2$ such
that $\sg_{3i}< \sg_{3i+1}$ for $i = 1, \ldots, n-1$. One way
to think of this set is that it is the set of permutations
that arise from a forest $F = (F_1, \ldots, F_n) \in \mathcal{F}_n^2$
such that the label of the right-most element in $F_i$ is less than the
label of the root
of $F_{i+1}$.  For example, if $n = 5$, then we are asking for
labellings of the poset whose Hasse diagram is
pictured at the top Figure \ref{fig:poset}.  We want to find
the set of all labellings of the nodes of this poset such
that when there is an arrow from a node $x$ to a node $y$, then
the label of node $x$ is less than label of node $y$. This is equivalent
to finding the set of all linear extensions of the poset.
We have given an example of such a labeling on the second line
of Figure \ref{fig:poset} and its corresponding permutation
in $\mathcal{SF}_5^2$ in the third line of Figure \ref{fig:poset}.
Given an element of $\sg = \sg_1 \cdots \sg_{3n} \in \mathcal{ISF}_{n}^2$,
we let
$$\ris_{1,2}(\sg) = |\{i:\sg_i < \sg_{i+1} \ \& \ i \equiv 1,2 \mod 3\}|.$$
That is, $\ris_{1,2}(\sg)$  keeps track of the number of
rises between pairs of the form $\sg_{3j+1} \sg_{3j+2}$ and
$\sg_{3j+2} \sg_{3j+3}$.

\begin{figure}[htbp]
  \begin{center}
    \includegraphics[width=0.5\textwidth,height=4cm]{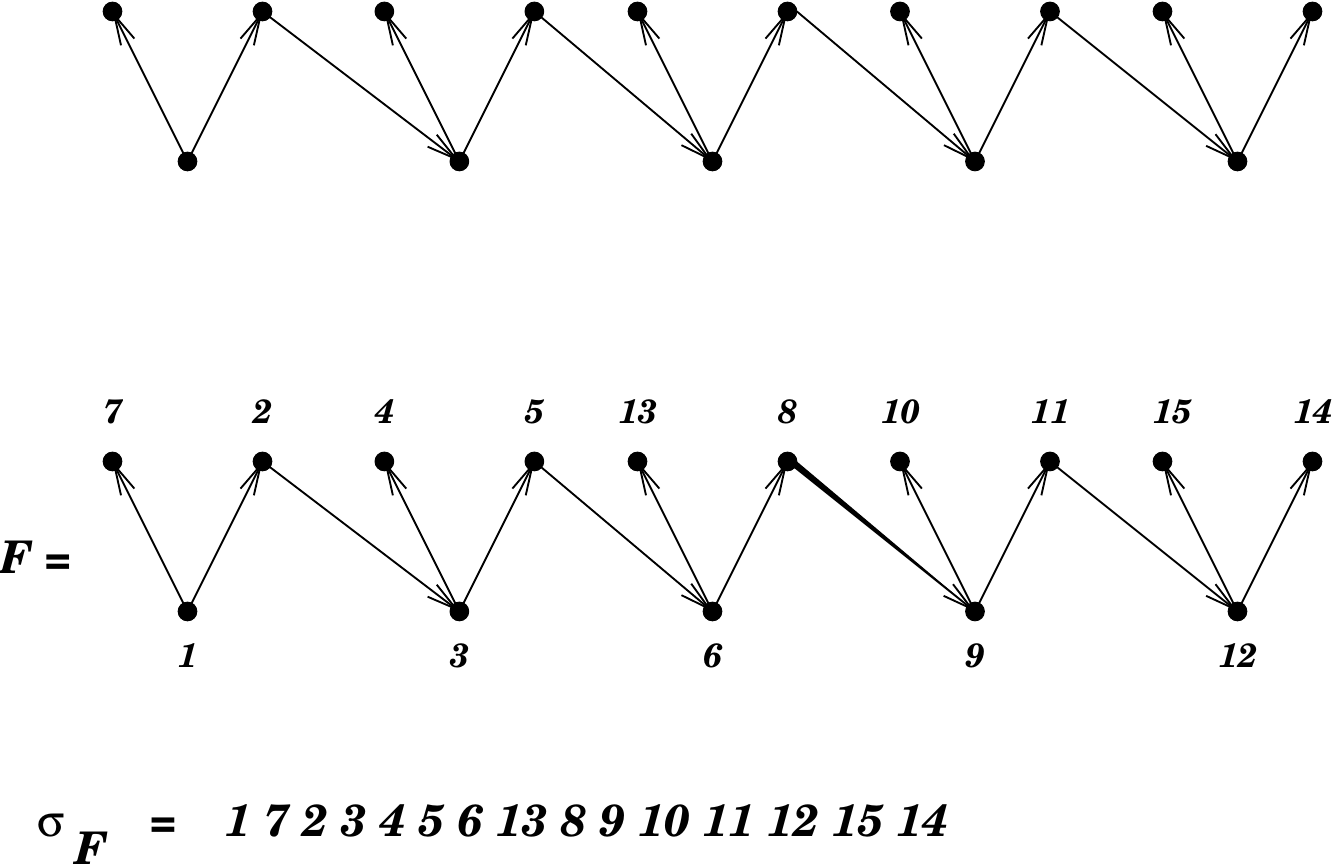}
    \caption{The poset for $\mathcal{ISF}_5^2$.}
    \label{fig:poset}
  \end{center}
\end{figure}

We claim that
$$x^n\prod_{k=1}^n (x+3k-2) = \sum_{\sg \in \mathcal{ISF}_n^2}
x^{\ris_{1,2}(\sg)}.$$
This is easy to prove by induction.
First, it easy to check that there are exactly two permutations
in $\mathcal{ISF}_1^2$, namely 123 and 132, so
that $\sum_{\sg \in \mathcal{ISF}_1^2} x^{\ris_{1,2}(\sg)} =x(1+x)$
as claimed. Now suppose that our formula holds for $k < n$.
Then consider Figure \ref{fig:rising1} where we have redrawn
the poset so that the positions correspond to the
elements in $\sg_F$. It is easy to see that the label
of the left-most element must be one since there is a directed path
from that element to any other element in the poset. There must be a rise
from $\sg_1$ to $\sg_2$ so we add a label $x$ below that position.
Next consider node which
has label 2.  If 2 is the label of the second element, then
the label of the third element must be 3 since there is a directed path
from that element to any of the other unlabeled elements in the poset at this point.
In this case $2=\sg_2 < \sg_3 =3$ so we add a label $x$ below that position.
If the label of the second element is $a$ where $a >2$, then
the label of the third element must be 2 since there is a directed path
from that element to any of the other unlabeled elements in the poset at this point.
We have $3n-2$ ways to choose $a$. In this
case the pair $\sg_2 \sg_3$ is not a rise so that that we do
not add a label $x$ below that position. Thus our
choices of labels for the binary shrub $F_1$ gives rise to
a factor of $x(x+3n-2)$ in our sum. Note that once we have
placed the labels on $F_1$, the remaining labels are completely free.
Thus it follows that
\begin{eqnarray*}
\sum_{\sg \in \mathcal{ISF}_n^2}
x^{\ris_{1,2}(\sg)} &=& x(x+3n-2) \sum_{\sg \in \mathcal{ISF}_{n-1}^2} x^{\ris_{1,2}(\sg)} \\
&=& x^n \prod_{k=1}^{n} (x+3k-2).
\end{eqnarray*}

\begin{figure}[htbp]
  \begin{center}
    \includegraphics[width=0.5\textwidth,height=4cm]{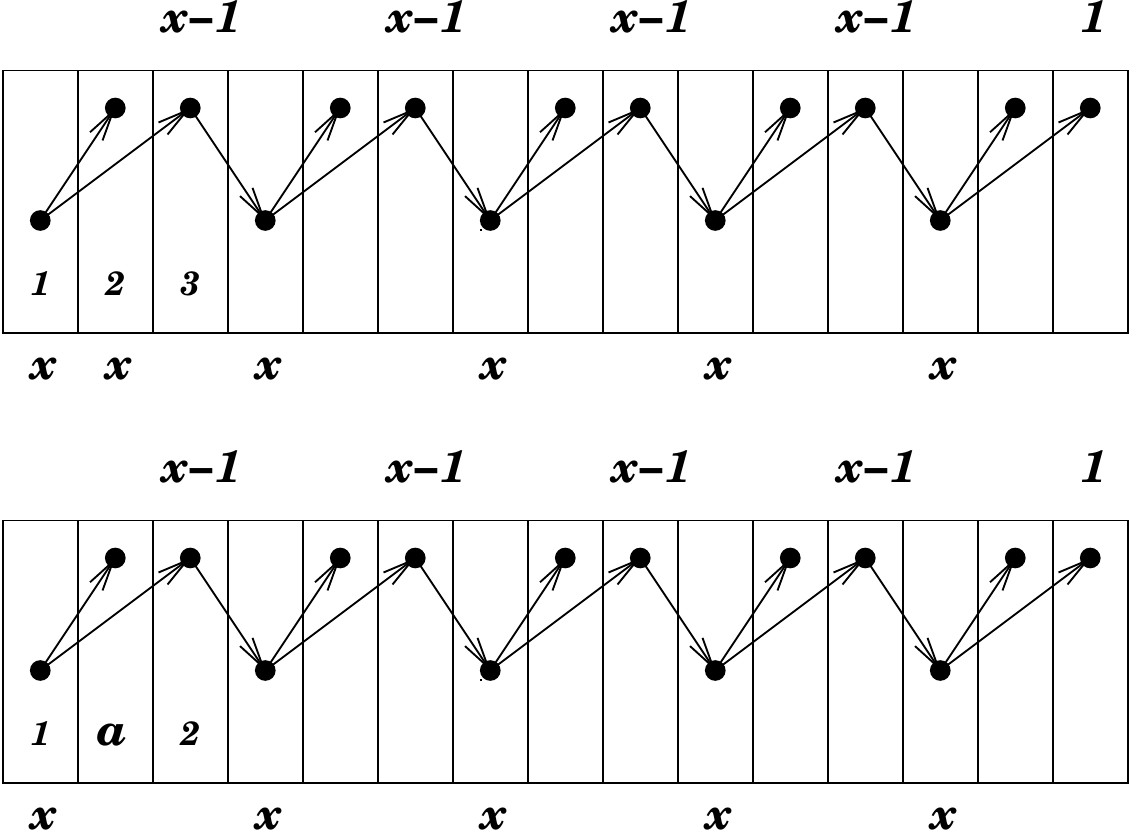}
    \caption{The recursive construction of elements of
     $\mathcal{ISF}_n^2$.}
    \label{fig:rising1}
  \end{center}
\end{figure}

To complete our combinatorial interpretation for the right-hand
side of (\ref{eq:ris3}), we  interpret
the extra factor of $(x-1)^{n-1}$ in $\theta(e_{3n})$
as adding a label $(x-1)$ on every third element
except the last one. In Figure \ref{fig:rising1}, we indicate
this by putting such labels at the top of the diagram.

We are now in a position to give a combinatorial interpretation
to the right-hand side of (\ref{eq:ris3}).  That is,
we first choose a brick tabloid $B = (3b_1, \ldots, 3b_{\ell(\mu)})$
consisting of bricks whose size is a multiple of 3.
Then we use the multinomial
coefficient $\binom{3n}{3b_1, \ldots, 3b_{\ell(\mu)}}$ to pick
an ordered sequence of sets $S_1, \ldots, S_{\ell(\mu)}$ such
that $|S_i| =3b_i$ and $S_1, \ldots, S_{\ell(\mu)}$ partition
the elements $\{1, \ldots, 3n\}$. For each brick $3b_i$, we
interpret  the factor $x^{b_i} \prod_{k=1}^{b_i} (x+3k-2)$
as all ways $\gamma_1^{(i)} \ldots \gamma^{(i)}_{3b_i}$
of arranging the elements of $S_i$ in the cells
of the brick $3b_i$   such
that $\red(\gamma_1^{(i)} \cdots \gamma^{(i)}_{3b_i}) \in
\mathcal{ISF}_{b_i}^2$ where we place a label $x$
below the cell containing $\gamma^{(i)}_j$ if $j =1,2 \mod 3$ and
$\gamma^{(i)}_j < \gamma^{(i)}_{j+1}$. Finally, we interpret
the factor $(x-1)^{b_i-1}$ as all ways of labeling
the cells containing the elements
$\gamma^{(i)}_3, \ldots, \gamma^{(i)}_{3b_i -3}$ with
either $x$ or $-1$. We shall also label the last cell of a brick
$3b_i$ with 1.
Let $\mathcal{O}_{3n}$ denote the set of all objects created in this
way. Then $\mathcal{O}_{3n}$ consists of all triples
$(B,\sg,L)$ such that $B =(3b_1, \ldots, 3b_k)$ is a brick
tabloid all of whose bricks have length a multiple of 3,
$\sg$ is a permutation in $S_{3n}$, and $L$ is labeling of
the cells of $B$  such that the following four conditions hold.
\begin{enumerate}
\item For each $i =1, \ldots ,k$, the reduction of the sequence
of elements obtained by reading the elements in the brick $3b_i$
from left to right is an element of $\mathcal{ISF}_{b_i}^2$.
\item The cell containing a $\sg_i$ such that $i \equiv 1,2 \mod 3$ is
labeled with an $x$ if and only if $i \in Rise(\sg)$.
\item The label of a cell at the end of any brick is $1$.
\item The cells containing elements of the form
 $\sg_{3i}$ which are not at the end
of a brick are labeled with either $-1$ or $x$.
\end{enumerate}

For each such $(B,\sg,L) \in \mathcal{O}_{3n}$, we let the weight
of $(B,\sg,L)$,
$w(B,\sg,L)$, be the product of all its $x$ labels and we let
the sign of $(B,\sg,L)$, $sgn(B,\sg,L)$, be the product of all its
$-1$ labels. For example, at the top of Figure \ref{fig:rising2}, we
picture an element $(B,\sg,L) \in \mathcal{O}_{18}$ such that
$w(B,\sg,L) = x^{11}$ and $sgn(B,\sg,L) =-1$. It follows
that
\begin{equation}\label{eq:ris4}
(3n)!\theta(h_{3n}) = \sum_{(B,\sg,L) \in \mathcal{O}_{3n}}
sgn(B,\sg,L)w(B,\sg,L).
\end{equation}

Next we will define a sign-reversing involution
$J:\mathcal{O}_{3n} \rightarrow \mathcal{O}_{3n}$ which we will use
to simplify the right-hand side of (\ref{eq:ris4}).
Given a triple $(B,\sg,L) \in \mathcal{O}_{3n}$, where $B =(3b_1,
\ldots, 3b_k)$ and $\sg = \sg_1 \cdots \sg_{3n}$, scan the cells
from left to right looking for the first cell $c$ such that either
\begin{enumerate}[{Case }1:]
\item  $c = 3s$ for some $1 \leq s \leq n-1$
and the label on cell $c$ is $-1$ or
\item $c$ is that last cell of brick $3b_i$ for some $i <k$
and $\sg_c < \sg_{c+1}$.
\end{enumerate}

In Case 1, suppose that $c$ is in brick $3b_i$. Then $J(B,\sg,L)$
is obtained from $(B,\sg,L)$ by splitting
brick $3b_i$ into two bricks $3b_i^*$ and $3b_i^{**}$, where
$3b_i^*$ contains the cells of $3b_i$ up to and including
cell $c$ and $3b_i^{**}$ contains the remaining cells of $3b_i$,
and changing the label on cell $c$ from $-1$ to 1.  In Case 2,
$J(B,\sg,L)$
is obtained from $(B,\sg,L)$ by combining  bricks $3b_{i}$ and $3b_{i+1}$ into a single brick
$3b$ and changing the label on cell $c$ from 1 to $-1$. If neither
Case 1 or Case 2 applies, then we define $J(B,\sg,L) = (B,\sg,L)$.

\begin{figure}[htbp]
  \begin{center}
    \includegraphics[width=0.5\textwidth,height=4.5cm]{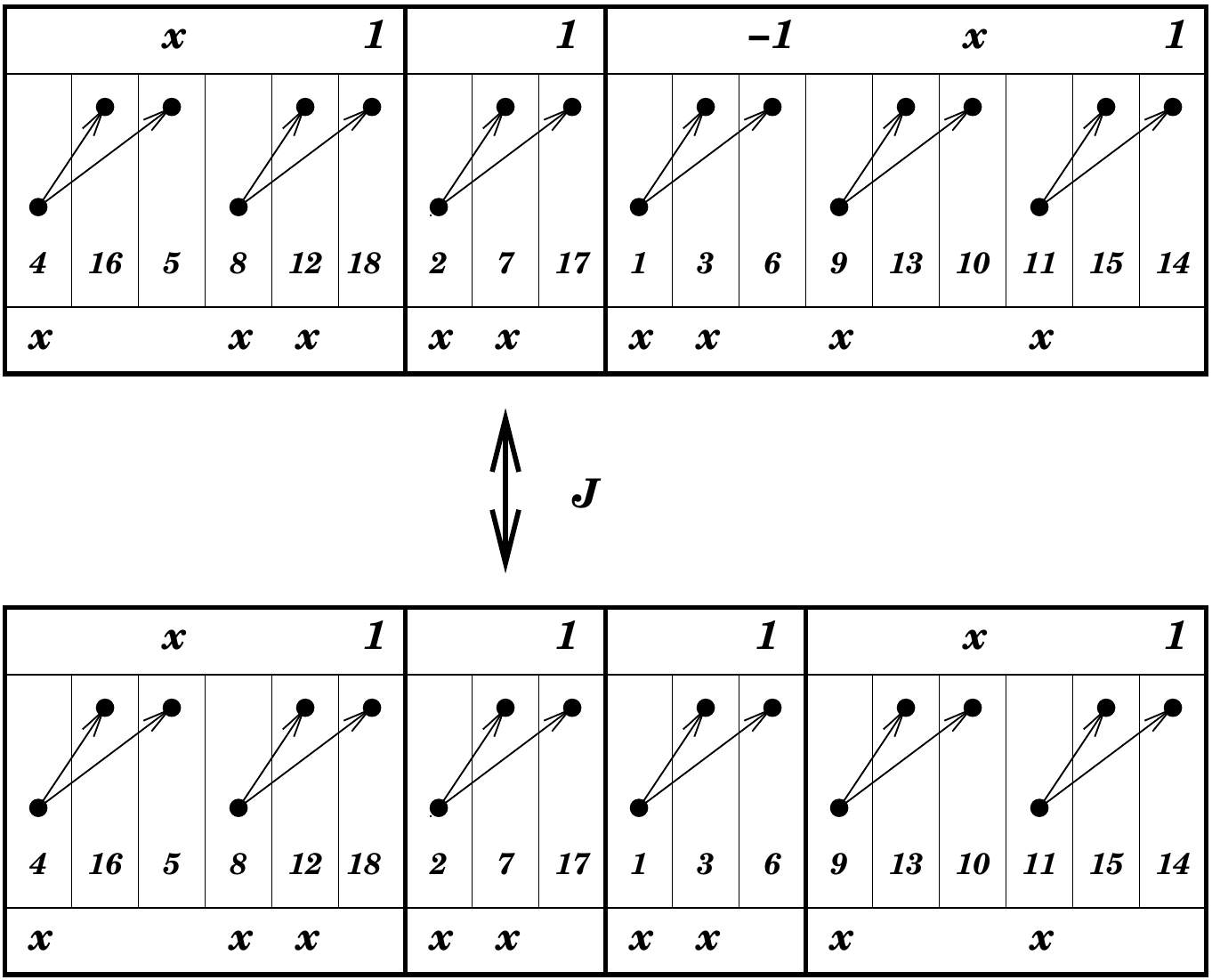}
    \caption{An example of the  involution $J$.}
    \label{fig:rising2}
  \end{center}
\end{figure}

For example,
if $(B,\sg,L)$ is the element of $\mathcal{O}_{18}$ pictured
at the top of Figure \ref{fig:rising2}, then
$B=(3b_1,3b_2,3b_3)$ where $b_1 =2$, $b_2=1$ and $b_3 =3$. Note that
we cannot combine bricks $3b_1$ and $3b_2$ since $18 =\sg_6 > \sg_7 =2$
and we cannot combine bricks $3b_2$ and $3b_3$ since $17 =\sg_9 > \sg_{10} =1$.
Thus the first cell $c$ where either Case 1 or Case 2 applies is
cell  $c=12$. Thus we are in Case 1 and $J(B,\sg,L)$ is
obtained from $(B,\sg,L)$ by splitting
brick $3b_3$ into two bricks, the first one of size 3 and second one of
size 6, and changing the label on cell 12 from $-1$ to 1. Thus
$J(B,\sg,L)$ is pictured at the bottom of Figure \ref{fig:rising2}.

It is easy to see that $J$ is an involution. That is, if we are
in Case I using cell $c$ to define $J(B,\sg,L)$, then we will be
in Case II using cell $c$ when we apply $J$ to  $J(B,\sg,L)$ so
that $J(J(B,\sg,L)) = (B,\sg,L)$. Similarly, if we are
in Case II using cell $c$ to define $J(B,\sg,L)$, then we will be
in Case I using cell $c$ when we apply $J$ to  $J(B,\sg,L)$ so
that $J(J(B,\sg,L)) = (B,\sg,L)$. Moreover it is easy to see
that if $J(B,\sg,L) \neq (B,\sg,L)$, then
$$sgn(B,\sg,L)w(B,\sg,L) = -sgn(J(B,\sg,L))w(J(B,\sg,L)).$$
It follows that
\begin{eqnarray}\label{eq:ris5}
(3n)!\theta(h_{3n}) &=& \sum_{(B,\sg,L) \in \mathcal{O}_{3n}}
sgn(B,\sg,L)w(B,\sg,L) \nonumber \\
&=& \sum_{(B,\sg,L) \in \mathcal{O}_{3n},
J(B,\sg,L) = (B,\sg,L)}
sgn(B,\sg,L)w(B,\sg,L).
\end{eqnarray}

Thus we must examine the fixed points of $J$ on $\mathcal{O}_{3n}$.
It is easy to see that if $(B,\sg,L)$, where $B=(3b_1, \ldots, 3b_k)$ and
$\sg = \sg_1 \cdots \sg_{3n}$, is a fixed point of $J$, then
there can be no cells labeled $-1$ and for $1 \leq i \leq k-1$,
the element in the last cell of brick $3b_i$ must be greater than the
element in the first cell of $3b_{i+1}$.  It follows that if
$c =3i$ for some $1 \leq i \leq n-1$, then
cell $c$ is labeled with an $x$ if and only if $\sg_c < \sg_{c+1}$.
Thus for a fixed point $(B,\sg,L)$ of $J$, $wt(B,\sg,L) = x^{\ris(\sg)}$
and $sgn(B,\sg,L) =1$. On the other hand, given any
$\sg \in \mathcal{SF}_n^2$, we can create a fixed point $(B,\sg,L)$ of
$J$ by having the bricks of $B$ end at the cells $c=3i$
such that $3i \not \in Rise(\sg)$ and labeling all the cells
$j$ such that $j \in Rise(\sg)$ with an $x$. For example,
if
$$\sg = 4~16~5~8~12~18~2~7~17~1~3~6~9~13~10~11~15~14,$$
then the fixed point corresponding to $\sg$ is pictured in
Figure \ref{fig:rising3}.

\begin{figure}[htbp]
  \begin{center}
    \includegraphics[width=0.5\textwidth,height=1.7cm]{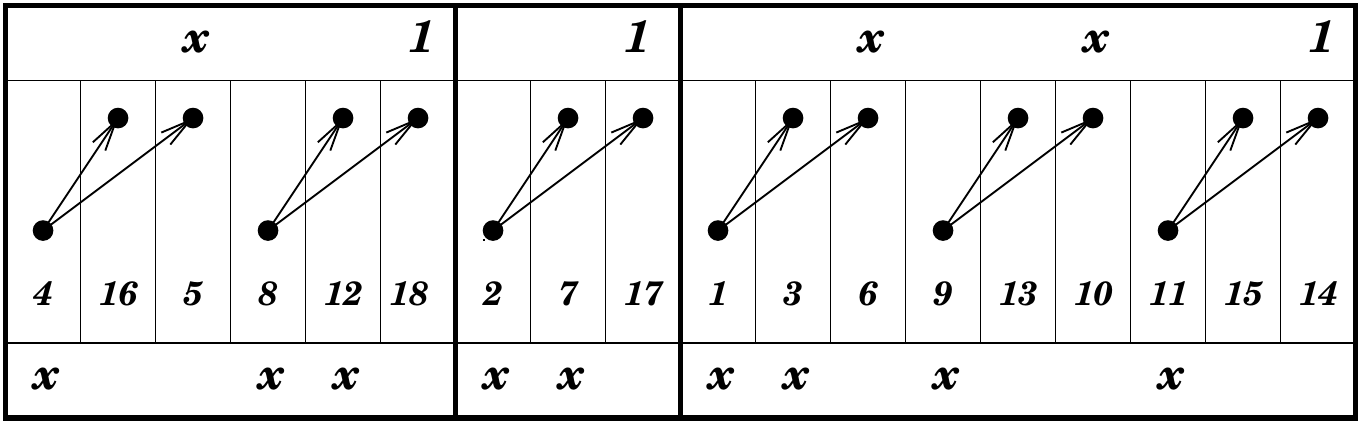}
    \caption{A fixed point of $J$.}
    \label{fig:rising3}
  \end{center}
\end{figure}

Hence, we have proved that
$$(3n)!\theta(h_{3n}) = \sum_{\sg \in \mathcal{SF}_n^2} x^{\ris(\sg)}$$
as desired.

It follows that
\begin{eqnarray*}
\theta(H(t)) &=& 1 + \sum_{n \geq 1}
\frac{t^{3n}}{(3n)!} \sum_{\sg \in \mathcal{SF}_n^2} x^{\ris(\sg)} \\
&=& \frac{1}{\theta(E(-t))} = \frac{1}{1+ \sum_{n\geq 1} (-t)^n \theta(e_{n})} \\
&=& \frac{1}{1+ \sum_{n\geq 1} (-t)^{3n} \frac{(-1)^{3n-1}}{(3n)!}
 x^n(x-1)^{n-1} \prod_{k=1}^n (x+3k-2)}\\
&=& \frac{1-x}{1-x + \sum_{n \geq 1} \frac{(x(x-1)t^3)^n}{(3n)!}\prod_{k=1}^n (x+3k-2)}.
\end{eqnarray*}

\end{proof}

We have used this generating function to compute the initial terms
of $R(x,t)$.
\begin{eqnarray*}
&&1+ (x (1+x)) \frac{t^3}{3!}+ x^2 \left(16+39 x+24 x^2+x^3\right)
\frac{t^6}{6!}+\\
&&x^3 \left(1036+4183 x+5506 x^2+2536 x^3+178 x^4+x^5\right)\frac{t^9}{9!} \\
&& x^4 \left(174664+992094 x+2054131 x^2+1896937 x^3+726622 x^4+67768 x^5+1383 x^6+x^7\right)\frac{t^{12}}{12!} \\&&x^5 \left(60849880+446105914 x+1272918569 x^2+1800188609 x^3+1307663949 x^4+\right.\\
&& \ \ \ \left. 442673265 x^5+49244651 x^6+1720211 x^7+10951 x^8+x^9\right)
\frac{t^{15}}{15!} + \cdots .
\end{eqnarray*}

We note that if $\sg = \sg_1 \cdots \sg_{3n} \in
\mathcal{SF}_n^2$, then we are forced to have
$\{3k+1:k=0, \ldots, n-1\} \subseteq Rise(\sg)$ by our definition
of the permutation associated with a forest of binary shrubs.
It follows that
\begin{eqnarray*}
\mathcal{R}(x,\frac{t}{x^{1/3}}) &=& 1 + \sum_{n \geq 1}
\frac{t^{3n}}{(3n)!} \sum_{\sg \in \mathcal{SF}_n^2} x^{\ris(\sg)-n}\\
&=& \frac{1-x}{1-x + \sum_{n \geq 1} \frac{((x-1)t^3)^n}{(3n)!}\prod_{k=1}^n (x+3k-2)}.
\end{eqnarray*}
We can then set $x=0$ in this expression to get the generating
function of $\sg \in \mathcal{SF}_n^2$ such that $\ris(\sg) = n$ which
is the minimal number of rises that an element
$F \in \mathcal{SF}_n^2$ can have. That is,
\begin{eqnarray*}
1 + \sum_{n \geq 1}
\frac{t^{3n}}{(3n)!} |\{\sg \in \mathcal{SF}_n^2: \ris(\sg)=n\}|
&=&  \frac{1}{1 + \sum_{n \geq 1} \frac{(-t^3)^n}{(3n)!}\prod_{k=1}^n (3k-2)}\\
&=&\frac{1}{1 + \sum_{n \geq 1} \frac{(-1)^nt^{3n}}{(3n)!}\prod_{k=1}^n (3k-2)}.
\end{eqnarray*}

\section{The generating functions \texorpdfstring{$\mathcal{RZ}(x,t)$}{}
for \texorpdfstring{$Z \in \{T,B,L,A\}$}{}}

In this section, we shall give a general method for computing the generating
functions
$\mathcal{RT}(x,t)$,
$\mathcal{RB}(x,t)$, $\mathcal{RL}(x,t)$, and $\mathcal{RA}(x,t)$.
Recall that for $Z \in \{T,B,L,A\}$
\begin{eqnarray*}
\mathcal{IZF}_n^2 &=& \{(F_1, \ldots, F_n) \in \mathcal{F}_n^2:
F_1 <_Z F_2 <_Z \cdots <_Z F_n\}, \\
\mathrm{IZF}_n^2 &=& |\mathcal{IZF}_n^2|, \ \mbox{and} \\
\mathcal{IZSF}_n^2 &=& \{\sg_F: F \in \mathcal{IZF}_n^2\}.
\end{eqnarray*}
Then we have the following theorem.
\begin{theorem}\label{thm:gen}
For $Z \in \{T,B,A,L\}$,
\begin{equation}\label{eq:gen}
\mathcal{RZ}(x,t) =  1 + \sum_{n \geq 1} \frac{t^{3n}}{(3n)!}
\sum_{F \in \mathcal{F}_n^2} x^{\mathrm{risZ}(F)} =  \frac{1}{1- \sum_{n \geq 1} \frac{t^{3n}}{(3n)!}(x-1)^{n-1}
\mathrm{IZF}_n^2}.
\end{equation}
\end{theorem}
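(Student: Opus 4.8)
The plan is to run the homomorphism method of the proof of Theorem~\ref{thm:Ris}, replacing the homomorphism $\theta$ by one tailored to the order $<_Z$. Fix $Z\in\{T,B,L,A\}$ and let $\theta_Z:\Lambda\to\mathbb{Q}[x]$ be the ring homomorphism determined by $\theta_Z(e_0)=1$, $\theta_Z(e_{3n+1})=\theta_Z(e_{3n+2})=0$ for $n\geq 0$, and
$$\theta_Z(e_{3n})=\frac{(-1)^{3n-1}}{(3n)!}\,(x-1)^{n-1}\,\mathrm{IZF}_n^2\qquad(n\geq 1),$$
which is well defined because the $e_n$ are algebraically independent generators of $\Lambda$. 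I will prove that $\theta_Z(h_{3n+1})=\theta_Z(h_{3n+2})=0$ and that
$$(3n)!\,\theta_Z(h_{3n})=\sum_{F\in\mathcal{F}_n^2}x^{\mathrm{risZ}(F)}.$$
Given this, applying $\theta_Z$ to $H(t)=1/E(-t)$ and using $(-t)^{3n}(-1)^{3n-1}=-t^{3n}$ produces \eqref{eq:gen} verbatim, just as in the final display of the proof of Theorem~\ref{thm:Ris}.

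For the two claims I would expand $h_m$ by \eqref{htoe}. Since $\theta_Z(e_\lambda)=0$ as soon as $\lambda$ has a part not divisible by $3$, the surviving terms in $\theta_Z(h_{3n})$ are exactly those indexed by $\lambda=3\mu$ with $\mu\vdash n$, and $\theta_Z(h_m)=0$ for $m\not\equiv 0\bmod 3$. Substituting the values of $\theta_Z(e_{3\mu})$ and collecting the signs exactly as in the passage leading to \eqref{eq:ris3} gives
$$(3n)!\,\theta_Z(h_{3n})=\sum_{\mu\vdash n}\ \sum_{(3b_1,\dots,3b_{\ell(\mu)})\in\mathcal{B}_{3\mu,3n}}\binom{3n}{3b_1,\dots,3b_{\ell(\mu)}}\prod_{i=1}^{\ell(\mu)}(x-1)^{b_i-1}\,\mathrm{IZF}_{b_i}^2.$$
Then I would read the right-hand side as a signed, weighted sum over a set $\mathcal{O}_{3n}^Z$ of triples $(B,F,L)$, in exact analogy with $\mathcal{O}_{3n}$: here $B=(3b_1,\dots,3b_k)$ is a brick tabloid whose bricks all have size divisible by $3$; the multinomial distributes $\{1,\dots,3n\}$ into consecutive blocks $S_1,\dots,S_k$ with $|S_i|=3b_i$; the factor $\mathrm{IZF}_{b_i}^2$ counts the arrangements of $S_i$ in brick $i$ whose reduction lies in $\mathcal{IZF}_{b_i}^2$ — this uses that $\mathrm{IZF}_{b_i}^2$ does not depend on which block $S_i$ is chosen, because each of $<_T,<_B,<_L,<_A$ is preserved by order-isomorphisms and hence is invariant under reduction; and the factor $(x-1)^{b_i-1}$ labels the cells in positions $3,6,\dots,3b_i-3$ of brick $i$ by $x$ or $-1$, with the last cell of every brick labeled $1$. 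With $w$ the product of the $x$-labels and $\mathrm{sgn}$ the product of the $-1$-labels, this yields $(3n)!\,\theta_Z(h_{3n})=\sum_{(B,F,L)\in\mathcal{O}_{3n}^Z}\mathrm{sgn}(B,F,L)\,w(B,F,L)$.

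Finally I would apply the same sign-reversing involution $J$ as in Theorem~\ref{thm:Ris}: scanning cells from the left, find the first $c=3s$ with $1\leq s\leq n-1$ that is either (Case~1) labeled $-1$, in which case split its brick at $c$ and relabel $c$ by $1$, or (Case~2) the last cell of a non-final brick with $F_s<_Z F_{s+1}$, in which case merge that brick with the next and relabel $c$ by $-1$; if no such $c$ exists, fix $(B,F,L)$. One checks as before that $J$ maps $\mathcal{O}_{3n}^Z$ to itself (splitting and merging preserve the within-brick increasing-$Z$ condition and the rule that brick-ending cells carry $1$), that $J$ is an involution, and that it reverses the sign while preserving the weight on non-fixed points, so only fixed points contribute. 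A fixed point has no $-1$ label and has its brick boundaries precisely at the positions $3s$ with $F_s\not<_Z F_{s+1}$; therefore the cells carrying an $x$ or $-1$ label are exactly the positions $3s$ with $F_s<_Z F_{s+1}$, all of them are labeled $x$, there are exactly $\mathrm{risZ}(F)$ of them, and $\mathrm{sgn}=1$, $w=x^{\mathrm{risZ}(F)}$. Each $F\in\mathcal{F}_n^2$ yields exactly one fixed point (put the brick boundaries at its non-$Z$-rises), which gives the main identity and hence \eqref{eq:gen}. The argument is in fact strictly easier than that of Theorem~\ref{thm:Ris} because no statistic has to be tracked inside the bricks; the one new ingredient is the reduction-invariance of the four orders, and the step needing the most care is the fixed-point analysis — verifying that the brick structure of a fixed point is forced by $F$, so that its weight is $x^{\mathrm{risZ}(F)}$ and not some quantity depending on $B$.
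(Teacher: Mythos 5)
Your proposal is correct and follows essentially the same route as the paper's proof: the same homomorphism $\theta_Z$ sending $e_{3n}$ to $\frac{(-1)^{3n-1}}{(3n)!}(x-1)^{n-1}\mathrm{IZF}_n^2$, the same brick-tabloid expansion of $\theta_Z(h_{3n})$, the same split/merge sign-reversing involution keyed to whether the shrub ending a brick is $<_Z$ the shrub starting the next, the same fixed-point analysis yielding weight $x^{\mathrm{risZ}(F)}$, and the same final application of $H(t)=1/E(-t)$. The only addition is your explicit remark that the four orders are invariant under reduction, a point the paper uses tacitly.
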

\begin{proof}
The proof of this theorem is similar to the proof of Theorem \ref{thm:Ris}.
The main difference between the two proofs is that in
Theorem \ref{thm:Ris}, we needed to keep track of the rises that
occur within each binary shrub in a forest
while in the current situation, we need only keep track
of the ``rises'' between adjacent binary shrubs  in a forest.

Let $Z \in \{T,B,L,A\}$ and let $\theta_Z:\Lambda \rightarrow \mathbb{Q}[x]$
be
the ring homomorphism determined by
setting $\theta_Z(e_0) =1$, $\theta_Z(e_{3n+1}) = \theta_Z(e_{3n+2}) =0$
for all $n \geq 0$, and
$$\theta_Z(e_{3n}) = \frac{(-1)^{3n-1}}{(3n)!}
\mathrm{IZF}_n^2 (x-1)^{n-1}$$
for all $n \geq 1$.
We claim that for $n \geq 0$, $\theta_Z(h_{3n+1}) = \theta_Z(h_{3n+2}) =0$
and that for $n \geq 1$,
\begin{equation}\label{eq:risZ1}
(3n)!\theta_Z(h_{3n}) = \sum_{F \in \mathcal{F}_n^2} x^{\mathrm{risZ}(F)}.
\end{equation}

We can use that same argument as in Theorem \ref{thm:Ris}
to conclude that $\theta_Z(h_n) =0$ if $n$ is equivalent to 1 or 2 mod 3
and that in the expansion $\theta(h_{3n})$, we need only
consider partitions $\lambda$ of $3n$ of the form
$3\mu$ where $\mu$ is a partition of $n$. Thus
\begin{eqnarray}\label{eq:risZ3}
(3n)!\theta_Z(h_{3n}) &=& (3n)! \sum_{\mu \vdash n}
(-1)^{3n -\ell(\mu)} B_{3\mu,3n} \theta_Z(e_{3\mu}) \nonumber \\
&=& (3n)! \sum_{\mu \vdash n}
(-1)^{3n -\ell(\mu)}
\sum_{(3b_1, \ldots, 3b_{\ell(\mu)}) \in \mathcal{B}_{3\mu,3n}}
\prod_{i=1}^{\ell(\mu)} \frac{(-1)^{3b_i -1}}{(3b_i)!} \mathrm{IZF}_{b_i}^2
(x-1)^{b_i-1}  \nonumber \\
&=& \sum_{\mu \vdash n}
\sum_{(3b_1, \ldots, 3b_{\mu}) \in \mathcal{B}_{3\mu,3n}}
\binom{3n}{3b_1, \ldots, 3b_{\ell(\mu)}} \prod_{i=1}^{\ell(\mu)}
 \mathrm{IZF}_{b_i}^2(x-1)^{b_i-1}.
\end{eqnarray}

As in the proof of Theorem \ref{thm:Ris}, we must give a combinatorial interpretation to the right-hand
side of (\ref{eq:risZ3}). We first choose a brick tabloid $B = (3b_1, \ldots, 3b_{\ell(\mu)})$
whose bricks have size a multiple of 3.  Then we use the multinomial
coefficient $\binom{3n}{3b_1, \ldots, 3b_{\ell(\mu)}}$ to pick
an ordered sequence of sets $S_1, \ldots, S_{\ell(\mu)}$ such
that $|S_i| =3b_i$ and $S_1, \ldots, S_{\ell(\mu)}$ partition
the elements $\{1, \ldots, 3n\}$. For each brick $3b_i$, we
interpret  the factor $\mathrm{IZF}_n^2$
as all ways $\gamma_1^{(i)} \cdots \gamma^{(i)}_{3b_i}$
of arranging the elements of $S_i$ in the cells
of the brick $3b_i$ such
that $\red(\gamma_1^{(i)} \cdots \gamma^{(i)}_{3b_i}) \in
\mathcal{IZSF}_{b_i}^2$.  Finally, we interpret
the factor $(x-1)^{b_i-1}$ as all ways of labeling
the cells containing the elements
$\gamma^{(i)}_3, \ldots, \gamma^{(i)}_{3b_i -3}$ with
either $x$ or $-1$. We also label the last cell of a brick with 1.
Let $\mathcal{OZ}_{3n}$ denote the set of all objects created in this
way. Then $\mathcal{OZ}_{3n}$ consists of all triples
$(B,\sg,L)$ such that $B =(3b_1, \ldots, 3b_k)$ is a brick
tabloid all of whose bricks have length a multiple of 3,
$\sg=\sg_1 \cdots \sg_{3n}$ is a permutation in $S_{3n}$, and $L$ is
labeling of
the cells of $B$  such that the following three conditions hold.
\begin{enumerate}
\item For each $i =1, \ldots ,k$, the reduction of the sequence
of elements obtained by reading the elements in the brick $3b_i$
from left to right is an element is in $\mathcal{IZSF}_{b_i}^2$.
 \item The label of a cell at the end of any brick is $1$.
\item The cells containing elements of the form
 $\sg_{3i}$ which are not at the end
of a brick are labeled with either $-1$ or $x$.
\end{enumerate}

For each such $(B,\sg,L) \in \mathcal{OZ}_{3n}$, we let the weight
of $(B,\sg,L)$,
$w(B,\sg,L)$, be the product of all its $x$ labels and we let
the sign of $(B,\sg,L)$, $sgn(B,\sg,L)$, be the product of all its
$-1$ labels. For example, suppose that $Z =B$. Then
at the top of Figure \ref{fig:rising2Z}, we
picture an element $(B,\sg,L) \in \mathcal{OB}_{18}$ such that
$w(B,\sg,L) = x^{2}$ and $sgn(B,\sg,L) =-1$.

It follows
that
\begin{equation}\label{eq:risZ4}
(3n)!\theta_Z(h_{3n}) = \sum_{(B,\sg,L) \in \mathcal{OZ}_{3n}}
sgn(B,\sg,L)w(B,\sg,L).
\end{equation}

Next we will define a sign-reversing involution
$J_Z:\mathcal{OZ}_{3n} \rightarrow \mathcal{OZ}_{3n}$ which we will use
to simplify the right-hand side of (\ref{eq:risZ4}).
Given a triple $(B,\sg,L) \in \mathcal{OZ}_{3n}$, where $B =(3b_1,
\ldots, 3b_k)$ and $\sg = \sg_1 \cdots \sg_{3n}$, scan the cells
from left to right looking for the first cell $c$ such that either
\begin{enumerate}[{Case }1:]
\item $c = 3s$ for some $1 \leq s \leq n-1$
and the label on cell $c$ is $-1$ or
\item $c$ is that last cell of brick $3b_i$ for some $i <k$
and the binary shrub $F$ corresponding to the cells
$3b_i-2,3b_i-1,3b_i$ is $<_Z$ the  binary shrub $G$ corresponding
to the cells
$3b_i+1,3b_i+2,3b_i+3$.
\end{enumerate}

In Case 1, suppose that $c$ is in brick $3b_i$. Then $J_Z(B,\sg,L)$
is obtained from $(B,\sg,L)$ by splitting
brick $3b_i$ into two bricks $3b_i^*$ and $3b_i^{**}$, where
$3b_i^*$ contains the cells of $3b_i$ up to and including
cell $c$ and $3b_i^{**}$ contains the remaining cells of $3b_i$,
and changing the label on cell $c$ from $-1$ to 1.  In Case 2,
$J_Z(B,\sg,L)$
is obtained from $(B,\sg,L)$ by combining  bricks $3b_{i}$ and $3b_{i+1}$
into a single brick
$3b$ and changing the label on cell $c$ from 1 to $-1$. If neither
Case 1 or Case 2 applies, then we define $J_Z(B,\sg,L) = (B,\sg,L)$.

\begin{figure}[htbp]
  \begin{center}
    \includegraphics[width=0.5\textwidth,height=4.5cm]{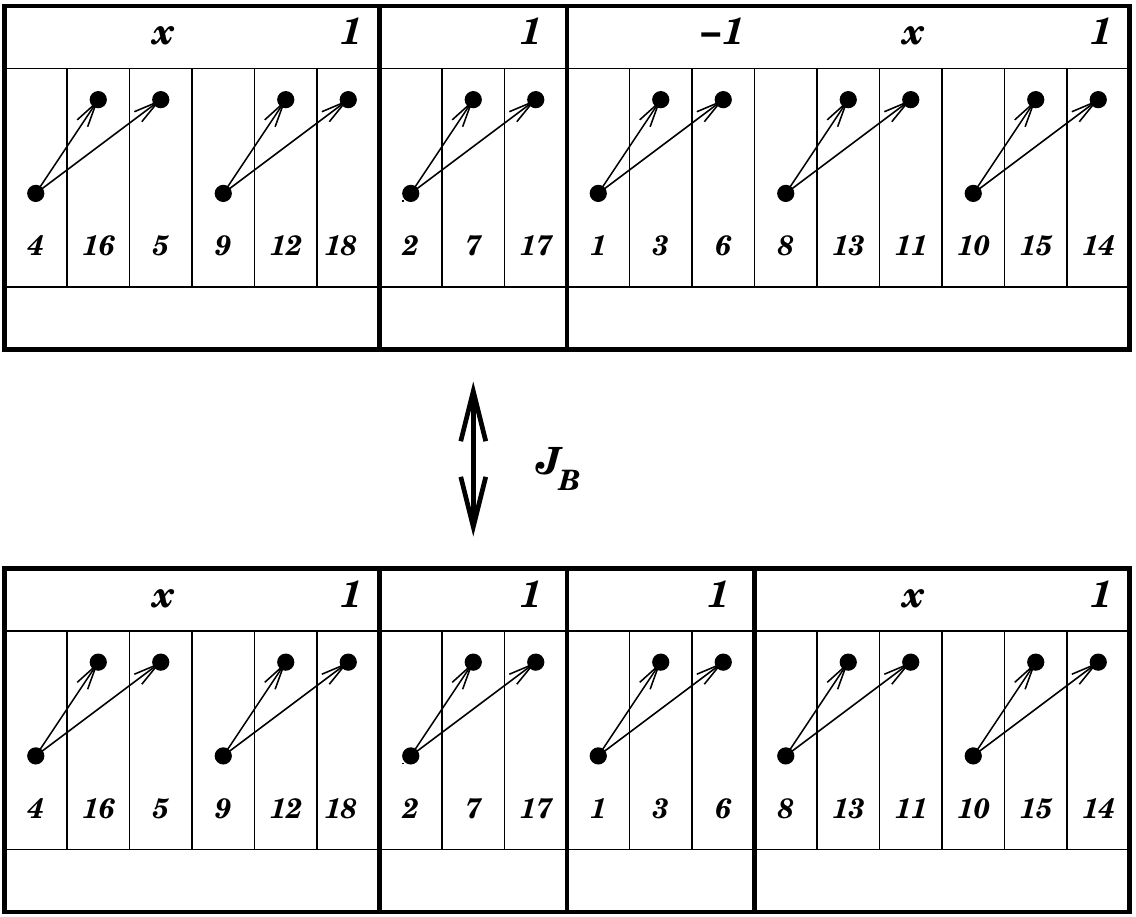}
    \caption{An example of the  involution $J_Z$ when $Z =B$.}
    \label{fig:rising2Z}
  \end{center}
\end{figure}

For example,
if $(B,\sg,L)$ is the element of $\mathcal{OB}_{18}$ pictured
at the top of Figure \ref{fig:rising2Z}, then
$B=(3b_1,3b_2,3b_3)$ where $b_1 =2$, $b_2=1$ and $b_3 =3$. Note that
we cannot combine bricks $3b_1$ and $3b_2$ since $9 =\sg_4 > \sg_7 =2$
and we cannot combine bricks $3b_2$ and $3b_3$ since $2 =\sg_7 > \sg_{10} =1$.
Thus the first cell $c$ where either Case 1 or Case 2 applies is
cell  $c=12$. Thus we are in Case 1 and $J_{B}(B,\sg,L)$ is obtained
from $(B,\sg,L)$ by splitting
brick $3b_3$ into two bricks, the first one of size 3 and second one of
size 6, and changing the label on cell 12 from $-1$ to 1. Thus
$J_B(B,\sg,L)$ is pictured at the bottom of Figure \ref{fig:rising2Z}.

We can use
the same reasoning as in Theorem \ref{thm:Ris} to show
that $J_Z$ is an involution.
Moreover it is easy to see
that if $J_Z(B,\sg,L) \neq (B,\sg,L)$, then
$$sgn(B,\sg,L)w(B,\sg,L) = -sgn(J_Z(B,\sg,L))w(J_Z(B,\sg,L)).$$
It follows that
\begin{eqnarray}\label{eq:ris5Z}
(3n)!\theta_Z(h_{3n}) &=& \sum_{(B,\sg,L) \in \mathcal{OZ}_{3n}}
sgn(B,\sg,L)w(B,\sg,L) \nonumber \\
&=& \sum_{(B,\sg,L) \in \mathcal{OZ}_{3n},
J_Z(B,\sg,L) = (B,\sg,L)}
sgn(B,\sg,L)w(B,\sg,L).
\end{eqnarray}

Thus we must examine the fixed points of $J_Z$ on $\mathcal{OZ}_{3n}$.
It is easy to see that if $(B,\sg,L)$, where $B=(3b_1, \ldots, 3b_k)$ and
$\sg = \sg_1 \cdots \sg_{3n}$, is a fixed point of $J_Z$, then
there can be no cells labeled $-1$ and for $1 \leq i \leq k-1$,
the binary shrub $F$ determined by the last three cells of
$3b_i$ is not $<_Z$ the binary shrub determined by the first
three cells of $3b_{i+1}$.  It follows that if
$c =3i$ for some $1\leq i \leq n-1$, then
cell $c$ is labeled with an $x$ if and only if
the binary shrub $F$ corresponding to the cells
$3b_i-2,3b_i-1,3b_i$ is $<_Z$ the  binary shrub $G$ corresponding
to the cells
$3b_i+1,3b_i+2,3b_i+3$.
Thus for a fixed point $(B,\sg,L)$ of $J_Z$, $wt(B,\sg,L) = x^{\mathrm{risZ}(\sg)}$
and $sgn(B,\sg,L) =1$. On the other hand, given any
$\sg_F$ where $F =(F_1, \ldots, F_n)
\in \mathcal{F}_n^2$, we can create a fixed point $(B,\sg_F,L)$ of
$J_Z$ by having the bricks of $B$ end at the cells $c=3i$
such that $i \not \in RiseZ(F)$ and labeling all the cells
$3j$ such that $j \in RiseZ(F)$ with an $x$. For example,
if $Z =B$ and
$$\sg = 4~16~5~8~12~18~2~7~17~1~3~6~9~13~10~11~15~14,$$
then the fixed point corresponding to $\sg$ is pictured in
Figure \ref{fig:rising3Z}.

\begin{figure}[htbp]
  \begin{center}
    \includegraphics[width=0.5\textwidth,height=2cm]{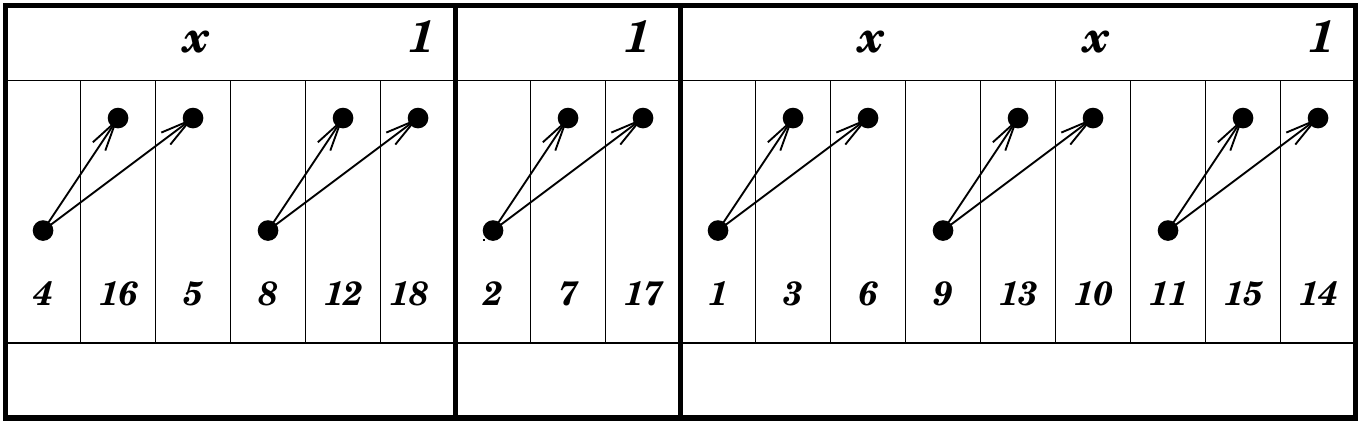}
    \caption{A fixed point of $J_B$.}
    \label{fig:rising3Z}
  \end{center}
\end{figure}

Hence, we have proved that
$$(3n)!\theta_Z(h_{3n}) =
\sum_{F \in \mathcal{F}_n^2} x^{\mathrm{risZ}(F)}$$
as desired.

Thus, for all $Z \in \{T,B,L,A\}$,
\begin{eqnarray*}
\theta_Z(H(t)) &=& 1 + \sum_{n \geq 1}
\frac{t^{3n}}{(3n)!} \sum_{F \in \mathcal{F}_n^2} x^{\mathrm{risZ}(F)} \\
&=& \frac{1}{\theta_Z(E(-t))} = \frac{1}{1+ \sum_{n\geq 1} (-t)^n \theta_Z(e_{n})} \\
&=& \frac{1}{1+ \sum_{n\geq 1} (-t)^{3n} \frac{(-1)^{3n-1}}{(3n)!}
 \mathrm{IZF}_n^2 (x-1)^{n-1}} \\
&=& \frac{1-x}{1-x + \sum_{n \geq 1} \frac{((x-1)t^3)^n}{(3n)!}
\mathrm{IZF}_n^2}.
\end{eqnarray*}

\end{proof}

\section{Computing \texorpdfstring{$\mathrm{IZF}_n^2$}{} for \texorpdfstring{$Z \in \{T,B,L,A\}$}{}}

Based on our results from the last section, all we need to
do in order to compute  the generating functions
$\mathcal{RZ}(x,t)$ for $Z \in \{T,B,L,A\}$ is
to compute  $\mathrm{IZF}_n^2$ for $Z \in \{T,B,L,A\}$.

\subsection{ \texorpdfstring{$\mathrm{ITF}_n^2$}{}}

It is easy to see that if $F =(F_1, \ldots, F_n)$ is such
that $F_1<_T F_2 <_T \cdots <_T F_n$, then the labels on
$F_i$ must be $3i-2$, $3i-1$ and $3i$ for $i =1, \ldots, n$. We have
exactly 2 ways to arrange these labels to make a binary shrub which
are pictured in Figure \ref{fig:22ways}.  It follows
that $\mathrm{ITF}_n^2 =2^n$ for all $n \geq 1$.  Thus
by Theorem \ref{thm:gen},
\begin{eqnarray*}
\mathcal{RT}(x,t) &=& 1 + \sum_{n \geq 1} \frac{t^{3n}}{(3n)!}
\sum_{F \in \mathcal{F}_n^2} x^{\risT(F)} \\
&=&
\frac{1}{1- \sum_{n \geq 1} \frac{t^{3n}}{(3n)!} 2^n (x-1)^{n-1}} \\
&=&
\frac{1-x}{1- x+ \sum_{n \geq 1} \frac{(2(x-1)t^3)^n}{(3n)!}}.
\end{eqnarray*}

\begin{figure}[htbp]
  \begin{center}
    \includegraphics[width=0.3\textwidth,height=1.5cm]{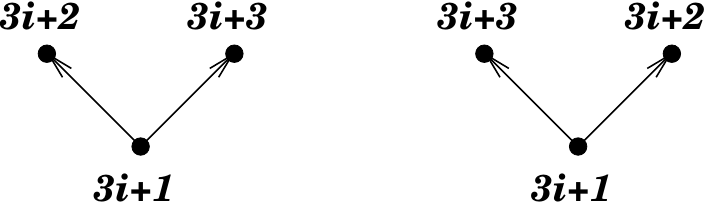}
    \caption{The two ways to label $F_i$ for $F \in \mathcal{ITF}_n^2$.}
    \label{fig:22ways}
  \end{center}
\end{figure}

Using this formula for $\mathcal{RT}(x,t)$, we computed the following
initial terms of
$\mathcal{RT}(x,t)$.
\begin{eqnarray*}
&&1+2\frac{t^3}{3!} +(76+4 x)\frac{t^6}{6!} + (12104+1328 x+8 x^2)
\frac{t^9}{9!} +\\
&& (5048368+843440 x+21776 x^2+16 x^3)\frac{t^{12}}{12!} + \\
&& (4354721312+977383552 x+48921792 x^2+349312 x^3+32 x^4)\frac{t^{15}}{15!}
+ \cdots.
\end{eqnarray*}

\subsection{ \texorpdfstring{$\mathrm{IBF}_n^2$}{}}

The set $\mathcal{IBF}_n^2$ is the set of permutations
that arise from a forest $F = (F_1, \ldots, F_n) \in \mathcal{F}_n^2$
such that the root elements are increasing from left to right.
For example, if $n = 5$, then we are asking for
labellings of the poset whose Hasse diagram is
pictured at the top Figure \ref{fig:posetB} where,
when there is an arrow from a node $x$ to a node $y$, then
the label of node $x$ is  less than label of node $y$.
We have given an example of such a labeling on the second line
of Figure \ref{fig:posetB} and its corresponding permutation
in $\mathcal{SF}_5^2$ in the third line of Figure \ref{fig:posetB}.
Thus we can think of $\mathcal{IBF}_n^2$ as the set of linear extensions
of the poset whose Hasse diagram is of the form pictured in
Figure \ref{fig:posetB}.

\begin{figure}[htbp]
  \begin{center}
    \includegraphics[width=0.5\textwidth,height=4.5cm]{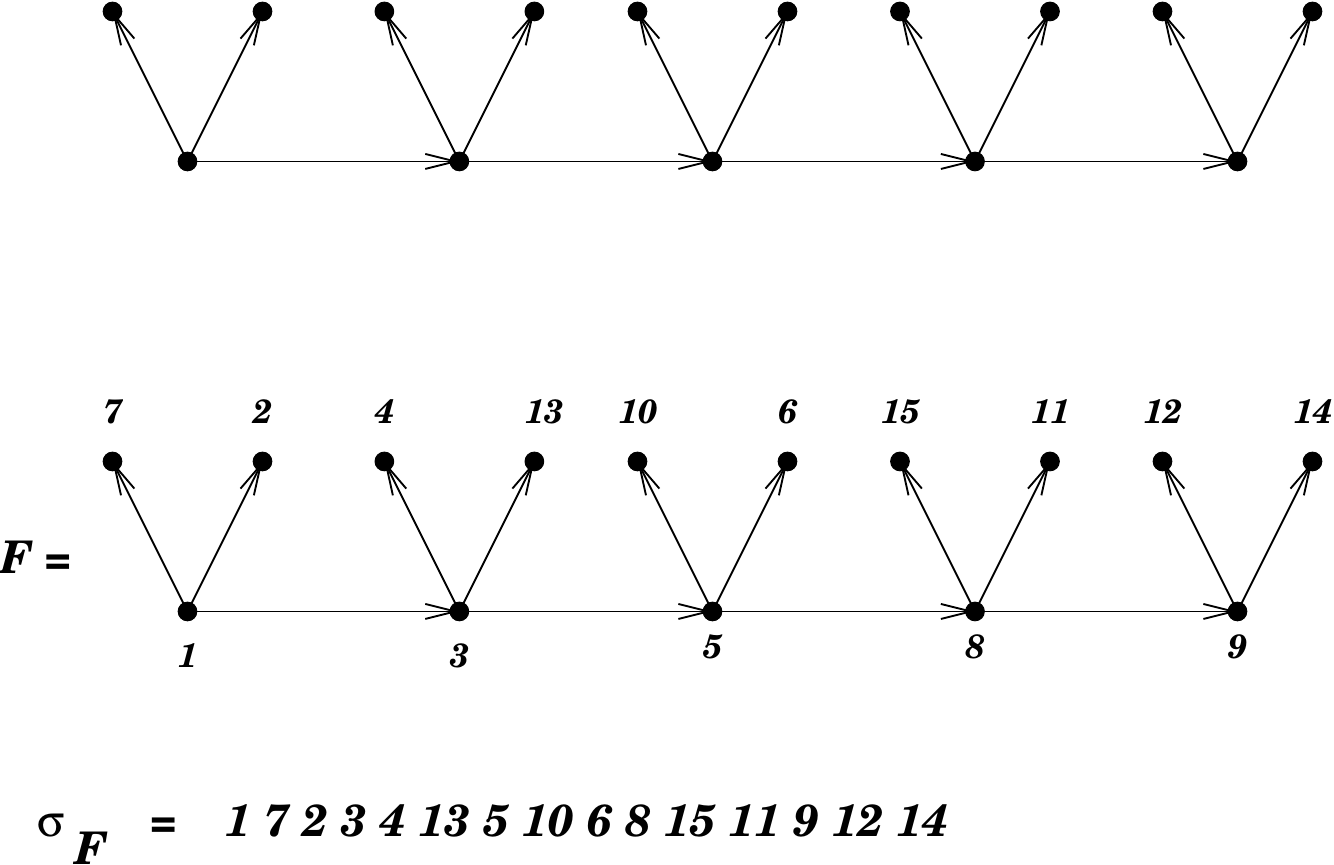}
    \caption{The poset for $\mathcal{IBF}_5^2$.}
    \label{fig:posetB}
  \end{center}
\end{figure}

We claim that
$$\mathrm{IBF}_n^2 = \prod_{k=1}^n 2 \binom{3k-1}{2} = \frac{(3n)!}{3^n (n!)}.$$
This is straightforward to prove by induction.
First, it easy to see from Figure \ref{fig:22ways} that
$$\mathrm{IBF}_1^2 =2 = \frac{3!}{3}.$$
Thus the base case of our induction holds.

Now suppose that our formula holds for $k < n$.
Let $F=(F_1, \ldots, F_n) \in \mathcal{IBF}_n^2$.
Then consider Figure \ref{fig:posetB}.  The label
of the left-most root element must be 1 since there is a directed path
from that element to any other element in the poset. Then we can choose
the remaining two elements in $F_1$ in $\binom{3n-1}{2}$ ways and we have
two ways to order the leaves of $F_1$. Thus we have
$(3n-1)(3n-2)$ ways to pick $F_1$. Once we have picked the labels of
$F_1$, the remaining labels for $F$ are completely free.
It follows
that
\begin{eqnarray*}
\mathrm{IBF}_n^2 &=&(3n-1)(3n-2)\mathrm{IBF}_{n-1}^2 \\
&=& \prod_{k=1}^n (3k-1)(3k-2) = \frac{(3n)!}{3^n (n!)}.
\end{eqnarray*}

Hence,
by Theorem \ref{thm:gen},
\begin{eqnarray*}
\mathcal{RB}(x,t) &=& 1 + \sum_{n \geq 1} \frac{t^{3n}}{(3n)!}
\sum_{F \in \mathcal{F}_n^2} x^{\risB(F)} \\
&=&
\frac{1}{1- \sum_{n \geq 1} \frac{t^{3n}}{(3n)!} \frac{(3n)!}{(3^n(n!))}  (x-1)^{n-1}} \\
&=& \frac{1-x}{1- x+ \sum_{n \geq 1} \frac{(\frac{1}{3}(x-1)t^3)^n}{n!}}\\
&=& \frac{1-x}{-x+ e^{\frac{1}{3}(x-1)t^3}}.
\end{eqnarray*}

Using this formula for $\mathcal{RB}(x,t)$, we computed the following
initial terms of $\mathcal{RB}(x,t)$.

\begin{eqnarray*}
&&1+2\frac{t^3}{3!}+ (40 (1+x)) \frac{t^6}{6!} +
(2240 \left(1+4 x+x^2\right)) \frac{t^9}{9!}+ \\
&& (246400 \left(1+11 x+11 x^2+x^3\right))\frac{t^{12}}{12!} + \\
&& (44844800 \left(1+26 x+66 x^2+26 x^3+x^4\right))\frac{t^{15}}{15!} + \\
&& (12197785600 \left(1+57 x+302 x^2+302 x^3+57 x^4+x^5\right))\frac{t^{18}}{18!} + \cdots .
\end{eqnarray*}

We note that the generating function for rises in permutations
is given by
$$1+\sum_{n \geq 1} \frac{t^n}{n!} \sum_{\sg \in S_n}
x^{\ris(\sg)} = \frac{1-x}{-x+e^{t(x-1)}}.$$
By comparing the form of the generating function $RB(x,t)$, one
can see that
\begin{equation}\label{risred}
\sum_{F \in \mathcal{F}_n^2} x^{\risB(F)} =
\frac{(3n)!}{3^n n!} \sum_{\sg \in S_n} x^{\ris(\sg)}.
\end{equation}
In fact this is easy to prove directly. Suppose that we are given a permutation
$\tau =\tau_1 \cdots \tau_n \in S_n$. Then we claim
that there are $\frac{(3n)!}{3^n n!}$ ways to create
an $F =(F_1, \ldots, F_n) \in \mathcal{F}_n^2$ such that if
$\sg_F = \sg_1 \cdots \sg_{3n}$, then
$\red(\sg_1 \sg_4  \cdots \sg_{3n-2}) = \tau$. That is,
suppose that $\tau_{j_k} =k$ for $k =1, \ldots, n$.
We let $1$ be the label of
the root of $F_{j_1}$ and then we have $(3n-1)(3n-2)$ ways
to pick the right and left leaves of $F_{j_1}$.  Once
we have fixed $F_{j_1}$, we let $c_2$ be the smallest
element $c$ in $\{1, \ldots, 3n\}$ such that $c$ is not a label
in $F_{j_1}$. We label the root of $F_{j_2}$ with $c_2$ and
then we have $(3n-4)(3n-5)$ ways to
pick the right and left leaves of $F_{j_2}$. Once
we have fixed $F_{j_1}$ and $F_{j_2}$, we let $c_3$ be the smallest
element $c$ in $\{1, \ldots, 3n\}$ such that $c$ is not a label
in $F_{j_1}$ or $F_{j_2}$. We label the root of $F_{j_3}$ with $c_3$ and
then we have $(3n-7)(3n-8)$ ways to
pick the right and left leaves of $F_{j_3}$. Continuing on in this
way, we see that there are $\prod_{i=0}^{n-1} (3n-(3k+1))(3n-(3k+2)) =
\frac{(3n)!}{3^n n!}$ ways to create an $F =(F_1, \ldots, F_n) \in \mathcal{F}_n^2$ such that if $\sg_F = \sg_1 \cdots \sg_{3n}$, then
$\red(\sg_1 \sg_4  \cdots \sg_{3n-2}) = \tau$. Observe that for any
$F$ created in this way, $\risB(F) = \ris(\tau)$. Thus (\ref{risred}) easily follows.

\subsection{ \texorpdfstring{$\mathrm{ILF}_n^2$}{}}

The set $\mathcal{ILF}_n^2$ is the set
of forests $F = (F_1, \ldots, F_n) \in \mathcal{F}_n^2$
such that $F_1<_L F_2 <_L <_L \cdots <_L F_n$.
Such a forest can be considered to a be labeling of a poset
$\mathbb{L}_{3n}$ of the type whose Hasse diagram is
pictured in Figure \ref{fig:risL}. For example,
at the bottom of Figure \ref{fig:risL}, we have redrawn the poset in a nicer
form.  Here when we draw an arrow from node $x$ to node $y$, then
we want the label of node $x$ to be less than label of node $y$ in
$\mathbb{L}_{3n}$. Thus the Hasse diagram of
$\mathbb{L}_{3n}$ consists of 3 rows of
$n$ nodes such that there are arrows
connecting the nodes in each row which go from left to right and, in each
column, there are arrows going from the node in the middle row to the nodes
at the top and bottom of that column.
Let $\mathcal{L}_{3n}$ denote the set of all linear extensions
of $\mathbb{L}_{3n}$, that is, the set of all
labellings of $\mathbb{L}_{3n}$ with the numbers
$\{1, \ldots, 3n\}$ such that if there is an arrow from node $x$ to
$y$, then the label on node $x$ is less than the label on node $y$.
Thus $\mathrm{ILF}_n^2 =|\mathcal{L}_{3n}|$.

\begin{figure}[htbp]
  \begin{center}
    \includegraphics[width=0.4\textwidth,height=3cm]{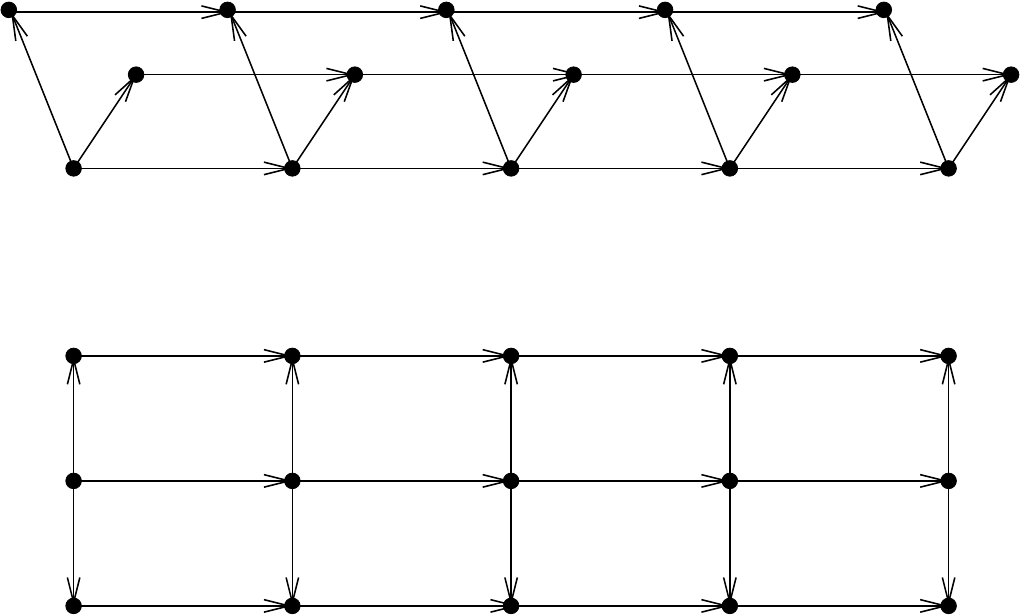}
    \caption{The poset for $\mathcal{ILF}_5^2$.}
    \label{fig:risL}
  \end{center}
\end{figure}

We then have the following theorem.

\begin{theorem}
$\mathrm{ILF}_n^2 = \frac{4^n (3n)!}{(n+1)!(2n+1)!}$.
\end{theorem}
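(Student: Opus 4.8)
The plan is to produce the explicit bijection promised in the introduction. Let $\mathcal{K}_n$ denote the set of lattice paths from the origin to the origin that stay in the closed first quadrant, using exactly $n$ steps of type $(1,1)$ (and hence, since the path returns to the origin, exactly $n$ steps of type $(-1,0)$ and $n$ of type $(0,-1)$). Since we have already identified $\mathcal{ILF}_n^2$ with the set $\mathcal{L}_{3n}$ of linear extensions of $\mathbb{L}_{3n}$, so that $\mathrm{ILF}_n^2 = |\mathcal{L}_{3n}|$, and since Kreweras \cite{K65} proved $|\mathcal{K}_n| = \frac{4^n(3n)!}{(n+1)!(2n+1)!}$, a bijection $\mathcal{L}_{3n} \leftrightarrow \mathcal{K}_n$ finishes the proof.

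To set up the correspondence, write $t_j, m_j, b_j$ for the top, middle, and bottom nodes in column $j$ of $\mathbb{L}_{3n}$ (see Figure \ref{fig:risL}), so that the covering relations are $m_j < t_j$ and $m_j < b_j$ for $1 \le j \le n$, together with $t_j < t_{j+1}$, $m_j < m_{j+1}$, $b_j < b_{j+1}$ for $1 \le j \le n-1$. Given $L \in \mathcal{L}_{3n}$, I would read the nodes of $L$ in the order in which they receive the labels $1, 2, \ldots, 3n$. Because each row of $\mathbb{L}_{3n}$ is a chain, after the first $k$ labels have been placed the filled set is completely determined by the triple $(i_t, i_m, i_b)$ giving the number of filled nodes in each row, and the relations $m_j < t_j$, $m_j < b_j$ force $0 \le i_t \le i_m$ and $0 \le i_b \le i_m$ throughout. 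At step $k$ the new label goes into the middle row precisely when $i_m < n$ (the node $m_{i_m+1}$ has only the already-filled $m_{i_m}$ below it); it can go into the top row only when $i_t < i_m$, and into the bottom row only when $i_b < i_m$. I record each step as a move in the coordinates $(x,y) = (i_m - i_t,\, i_m - i_b)$: filling a middle node is the move $(1,1)$, filling a top node is the move $(-1,0)$, and filling a bottom node is the move $(0,-1)$. The resulting lattice path starts at $(0,0)$, returns to $(0,0)$ (at the end all three counters equal $n$), never leaves the first quadrant (since $i_t \le i_m$ and $i_b \le i_m$ always), and uses exactly $n$ diagonal steps, one per column of the middle row. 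Hence $L \mapsto$ this path defines a map $\Phi\colon \mathcal{L}_{3n} \to \mathcal{K}_n$.

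For the inverse, given a path $P \in \mathcal{K}_n$ I would scan its steps in order and build a filling of $\mathbb{L}_{3n}$: a $(1,1)$ step means ``put the next label on the next unfilled middle node'', a $(-1,0)$ step means ``$\ldots$ on the next unfilled top node'', and a $(0,-1)$ step means ``$\ldots$ on the next unfilled bottom node'', using the labels $1, \ldots, 3n$ in this order. One checks that this filling is a genuine linear extension: the chain relations in each row hold because each row is filled from left to right, while $m_j < t_j$ (resp. $m_j < b_j$) holds because a $(-1,0)$ step (resp. $(0,-1)$ step) of a path in the closed first quadrant can only be taken from a point with $x \ge 1$ (resp. $y \ge 1$), which says exactly that the middle node $m_j$ of the relevant column is already filled. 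Since $P$ has exactly $n$ diagonal steps, the middle counter climbs from $0$ to $n$ and never exceeds $n$, so the procedure never stalls; it plainly inverts $\Phi$.

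The step I expect to be the crux is making the change of coordinates $(i_t, i_m, i_b) \mapsto (i_m - i_t,\, i_m - i_b)$ do its job cleanly: verifying that ``$m_j$ is filled before $t_j$ and before $b_j$'' is equivalent to ``the path stays in the closed first quadrant'', and that the legality conditions for the three kinds of moves match the geometric constraints on the three kinds of steps, so that $\Phi$ and its candidate inverse really are mutually inverse bijections. Once this is in place, the enumeration $\mathrm{ILF}_n^2 = |\mathcal{L}_{3n}| = |\mathcal{K}_n| = \frac{4^n(3n)!}{(n+1)!(2n+1)!}$ follows at once from Kreweras' result.
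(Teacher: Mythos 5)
Your proof is correct and is essentially the paper's own argument: the same bijection sending the node with label $i$ to a $(1,1)$, $(-1,0)$, or $(0,-1)$ step according to whether it lies in the middle, top, or bottom row, with the closed-first-quadrant condition corresponding exactly to the column relations $m_j < t_j$ and $m_j < b_j$, and Kreweras' enumeration of these paths completing the count. Your explicit bookkeeping via the counters $(i_t,i_m,i_b)$ and the coordinates $(i_m-i_t,\,i_m-i_b)$ is just a more detailed way of phrasing the paper's verification.
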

\begin{proof}
In \cite{K65} it has been proved that
$\frac{4^n (3n)!}{(n+1)!(2n+1)!}$ is the number of paths $P=(p_1,
\ldots, p_{3n})$ in the plane
which start at (0,0) and end at (0,0), stay entirely
in the first quadrant, and use only northeast steps $(1,1)$,
west steps $(-1,0)$, and south steps $(0,-1)$. See also
\cite{MBM} and \cite{Gessel}. The fact
that $P$ starts and ends at $(0,0)$ means that $P$ has
$n$ northeast steps, $n$ west steps, and
$n$ south steps. For any $1 \leq i \leq 3n$,
let $NE_i(P)$ equal the number of northeast steps in $(p_1, \ldots, p_i)$,
$W_i(P)$ equal the number of west steps in $(p_1, \ldots, p_i)$, and
$S_i(P)$ equal the number of south steps in $(p_1, \ldots, p_i)$.
The fact that $P$ stays in the first quadrant is equivalent
to the conditions that $NE_i(P) \geq W_i(P)$ and
$NE_i(P) \geq S_i(P)$ for $i =1, \ldots, 3n$. Let $\mathcal{P}_{3n}$ denote
the set of all such paths $P$ of length $3n$.

To prove our theorem, we shall define a bijection
from $\Gamma:\mathcal{L}_{3n} \rightarrow \mathcal{P}_{3n}$.
The map $\Gamma$ is quite simple, given a labeling $L \in
\mathcal{L}_{3n}$, we let $\Gamma(L) = (p_1, \ldots, p_{3n})$ be
the path which starts at (0,0) and
where $p_i$ is a northeast step if the label $i$ is in
the middle row of $L$, $p_i$ is west step if the label $i$ is in the top
row of $L$, and $p_i$ is a south step if the label $i$ is
in the bottom row $L$. An example of this map is given
in Figure \ref{fig:risL2} where we have put a label $i$ on
the $i^{th}$-step of the $\Gamma(L)$.

\begin{figure}[htbp]
  \begin{center}
    \includegraphics[width=0.3\textwidth,height=4.5cm]{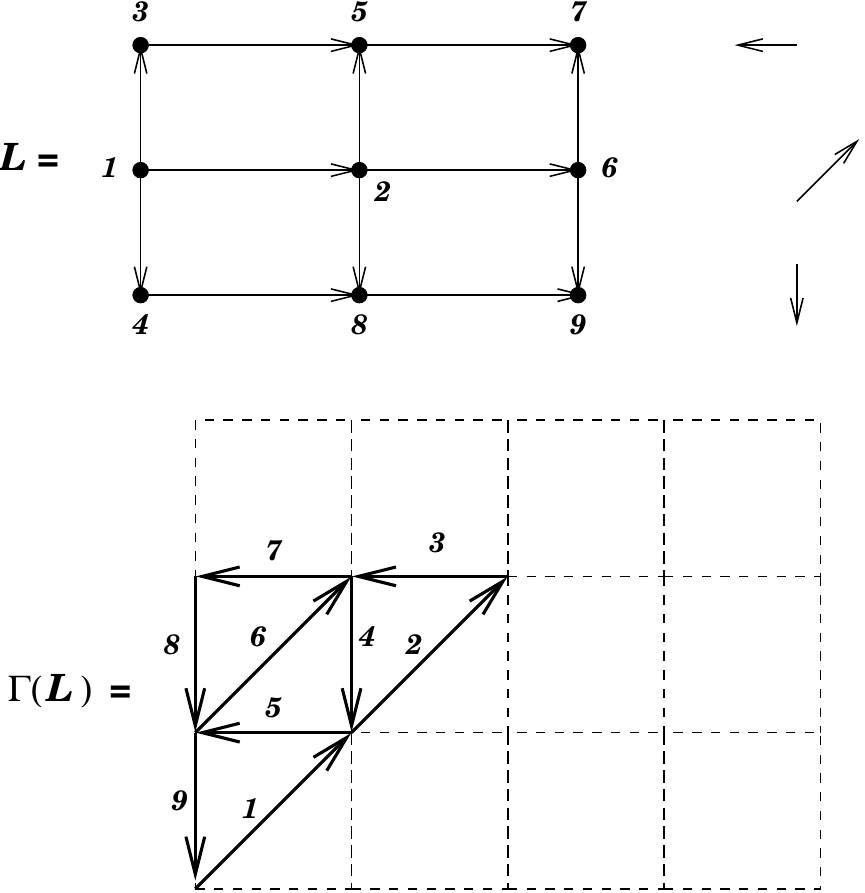}
    \caption{The bijection $\Gamma$.}
    \label{fig:risL2}
  \end{center}
\end{figure}

First we must check that if $L \in \mathcal{L}_{3n}$, then
$\Gamma(L) = (p_1, \ldots, p_{3n})$ is an element
of $\mathcal{P}_{3n}$.  It is easy to see that $\Gamma(L)$
starts and ends at $(0,0)$ since $\Gamma(L)$ has
$n$ northeast steps, $n$ west steps, and $n$ south steps.
Let $LT_i$, $LM_i$, and $LB_i$ denote
the label in $L$ of the $i^{th}$ element of the top row, middle row,
and bottom row, reading from left to right, respectively.
Suppose for a contradiction that there is
an $t$ such that $i = W_t(P) > NE_t(P) =j$.  This is impossible
since this would imply that $LT_i \leq t$ and $LM_i > t$ which
violates that fact that there is an arrow from the element
in the middle row of the $i^{th}$-column to the element in
the top row of $i^{th}$-column in $\mathbb{L}_{3n}$.
Similarly, suppose that there is
an $t$ such that $i = S_t(P) > NE_t(P) =j$.  This is impossible
since this would imply that $LB_i \leq t$ and $LM_i > t$ which
violates that fact that there is an arrow from the element
in the middle row of the $i^{th}$-column to the element in
the bottom row of $i^{th}$-column in $\mathbb{L}_{3n}$.   Thus for all $t$,
$NE_t(\Gamma(L)) \geq W_t(\Gamma(L))$ and
$NE_t(\Gamma(L)) \geq S_t(\Gamma(L))$ which means that
$\Gamma(L)$ stays in the first quadrant.

It is easy to see that $\Gamma$ is one-to-one.  That is, if
$L$ and $L'$ are two different labellings in $\mathcal{L}_{3n}$,
then let $i$ be the least $j$ such that $j$ is not in the same position
in the labellings $L$ and $L'$. Then clearly,
$\Gamma(L) \neq \Gamma(L')$ since the $i^{th}$ step of $\Gamma(L)$ will not be
the same as the $i^{th}$ step of $\Gamma(L')$.  To see that
$\Gamma$ maps onto $\mathcal{P}_{3n}$, suppose that we are given
$P=(p_1, \ldots, p_{3n})$ in $\mathcal{P}_{3n}$.
Let $L$ be the labeling of $\mathbb{L}_{3n}$ which is
increasing in the rows of $\mathbb{L}_{3n}$ such that
$i$ is label in the top row of $\mathbb{L}_{3n}$ if $p_i$ is a
west step, $i$ is label in the middle row of $\mathbb{L}_{3n}$ if $p_i$ is a
northeast step, and $i$ is label in the bottom
row of $\mathbb{L}_{3n}$ if $p_i$ is a south step. It is easy to
see from our definitions that $\Gamma(L) =P$. Hence the only thing
that we have to do is to check that $L \in \mathcal{L}_{3n}$.
Since $L$ is increasing in rows, we need only check that
the for each column $i$, the label $x$ of the
element in the middle row of column
$i$ is less than the label $y$ of the element in the top row of
column $i$ and and less than the label $z$ of the element of the bottom
row of column $i$. But this follows from the fact that
$P$ stays in the first quadrant. That is, if $y < x$, then
in $(p_1, \ldots, p_y)$, we would have more west steps than northeast steps
which would mean that the $y^{th}$ step of $P$ is not in the first quadrant.
Similarly if $z < x$, then
in $(p_1, \ldots, p_z)$, we would have more south steps than northeast steps
which would mean that the $z^{th}$ step of $P$ is not in the first quadrant.
Thus $\Gamma$ is a bijection from $\mathcal{L}_{3n}$ onto
$\mathcal{P}_{3n}$.

\end{proof}

Hence,
by Theorem \ref{thm:gen},
\begin{eqnarray*}
\mathcal{RL}(x,t) &=& 1 + \sum_{n \geq 1} \frac{t^{3n}}{(3n)!}
\sum_{F \in \mathcal{F}_n^2} x^{\risL(F)} \\
&=&
\frac{1}{1- \sum_{n \geq 1} \frac{t^{3n}}{(3n)!} \frac{4^n ((3n)!)}{(n+1)! (2n+1)!}  (x-1)^{n-1}} \\
&=& \frac{1-x}{1- x+ \sum_{n \geq 1} \frac{(4(x-1)t^3)^n}{(n+1)!(2n+1)!}}.
\end{eqnarray*}

Using this formula for $\mathcal{RL}(x,t)$, we computed
the following initial terms of $\mathcal{RL}(x,t)$.
\begin{eqnarray*}
&&1 +2\frac{t^3}{3!}+ 16 (4+x)\frac{t^6}{6!}+ 192 \left(43+26 x+x^2\right)
\frac{t^9}{9!}+  \\
&& 2816 \left(983+975 x+141 x^2+x^3\right)\frac{t^{12}}{12!}+ \\
&& 46592 \left(41141+57086 x+16506 x^2+766 x^3+x^4\right)\frac{t^{15}}{15!}+ \\
&& 835584 \left(2848169+5084786 x+2311247 x^2+261973 x^3+4324 x^4+x^5\right)\frac{t^{18}}{18!}+ \cdots .
\end{eqnarray*}

\subsection{\texorpdfstring{$\mathrm{IAF}_n^2$}{}.}

As with our other examples, we can think of $\mathrm{IAF}_n^2$ as the number of linear
extensions
of a poset of the type whose Hasse diagram is
pictured at the top of Figure \ref{fig:posetsA}.
That is, the Hasse diagram of
$A_n$ consists of $n$ binary shrubs where
there is an arrow from the right-most element of each shrub to the
left-most element of the next shrub.  We
shall also need to consider three related posets, $E_n$, $S_n$, and $B_n$.
$E_n$ is the poset whose Hasse diagram starts
with the Hasse diagram of
$A_n$ and adds one extra node which is connected to the Hasse diagram of $A_n$
by an arrow that goes from the right-most node of the right-most binary shrub
to the new node. $S_n$ is the poset whose Hasse diagram starts
with Hasse diagram of $A_n$ and adds one extra node which is connected to the Hasse diagram
of $A_n$
by an arrow that goes from the new node to the left-most node of the left-most binary shrub. $B_n$ is the poset whose Hasse diagram
starts with the Hasse diagram of $A_n$ and adds two
extra nodes, one which is connected as in $E_n$ and one which is connected
as in $S_n$.  Thus the Hasse diagram of
$E_n$ starts with Hasse diagram of $A_n$ and adds an extra node at the end,
the Hasse diagram of $S_n$ starts with the Hasse diagram of $A_n$ and adds an extra node at the start, and the Hasse diagram of $B_n$ starts with
the Hasse diagram of $A_n$ and adds both an extra node at the end and an extra node at the start.
For example, Figure \ref{fig:posetsA} pictures $A_5$, $E_5$, $S_5$,
and $B_5$.

\begin{figure}[htbp]
  \begin{center}
    \includegraphics[width=0.5\textwidth,height=3.2cm]{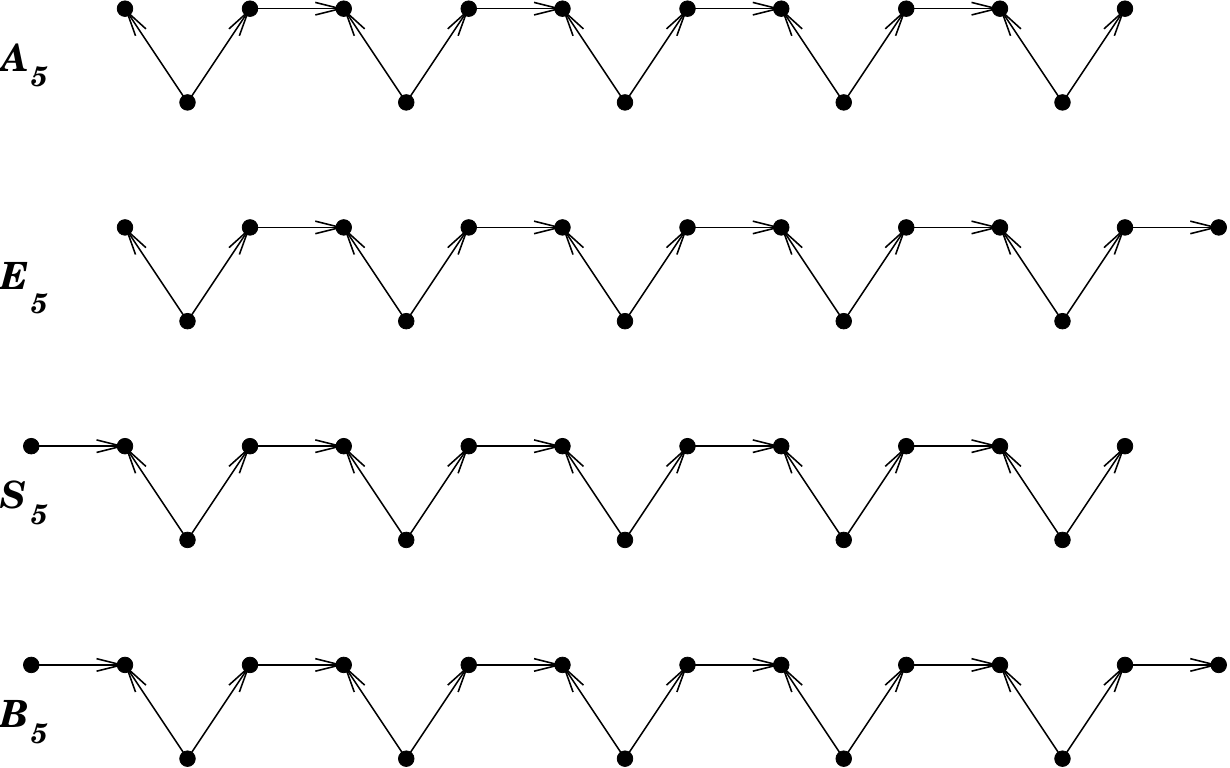}
    \caption{The posets $A_5$, $E_5$, $S_5$, and $B_5$.}
    \label{fig:posetsA}
  \end{center}
\end{figure}

For $W \in \{A,E,S,B\}$, we let $\mathcal{LW}_n$ denote the set
of linear extensions of $W_n$ and $\mathrm{LW}_n =|\mathcal{LW}_n|$.
We shall show that
$\mathrm{LA}_n$, $\mathrm{LE}_n$, $\mathrm{LS}_n$, and
$\mathrm{LB}_n$ satisfy simple recurrence relations.
First in Figures \ref{fig:A1} and \ref{fig:B1}, we
have listed all the elements of $\mathcal{LA}_1$, $\mathcal{LE}_1$,
$\mathcal{LS}_1$, and $\mathcal{LB}_1$. Thus
\begin{equation*}
\mathrm{LA}_1 = 2, \ \mathrm{LE}_1 = 3, \
\mathrm{LS}_1 = 5, \ \mbox{and} \
\mathrm{LB}_1 = 9.
\end{equation*}

\begin{figure}[htbp]
  \begin{center}
    \includegraphics[width=0.4\textwidth,height=3.5cm]{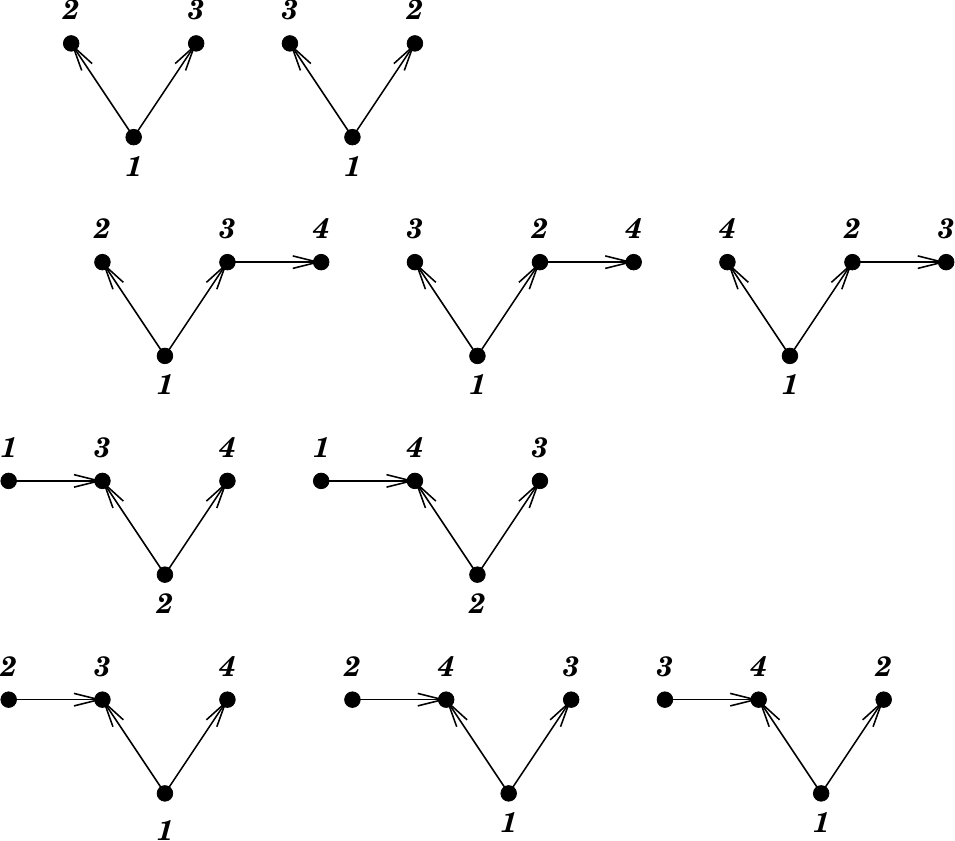}
    \caption{The elements of $\mathcal{LA}_1$, $\mathcal{LE}_1$,
and $\mathcal{LS}_1$.}
    \label{fig:A1}
  \end{center}
\end{figure}
\begin{figure}[htbp]
  \begin{center}
  \includegraphics[width=0.5\textwidth,height=3.2cm]{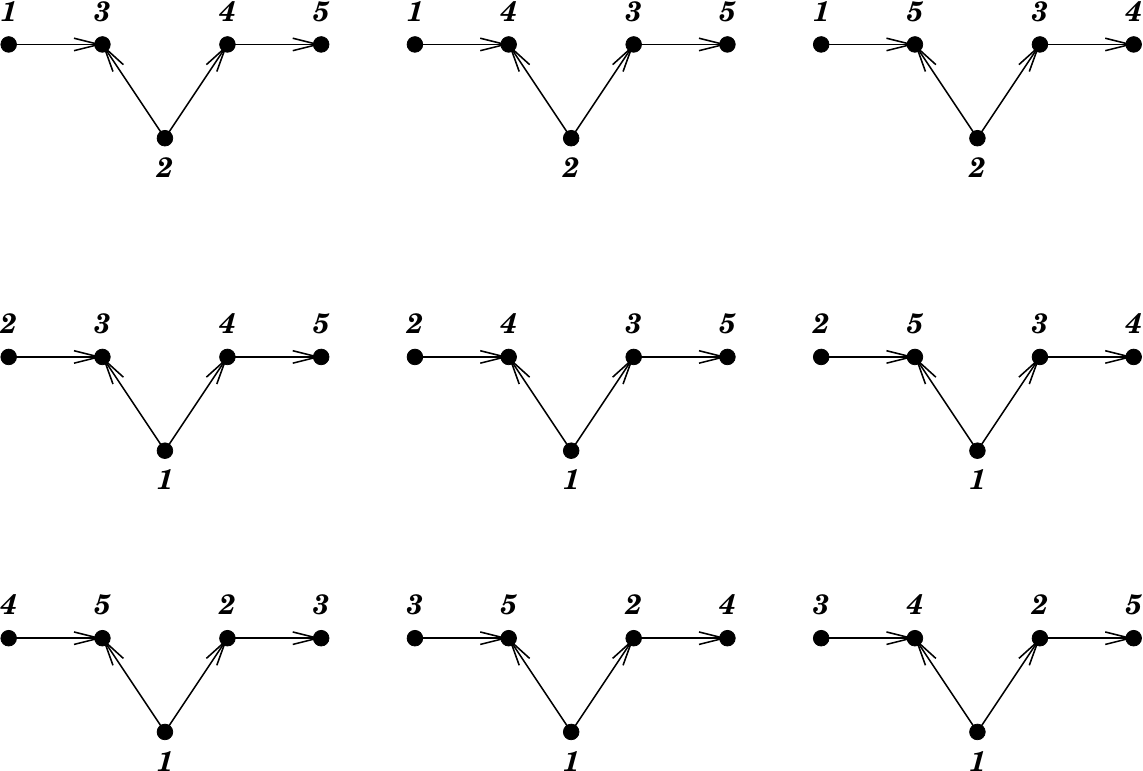}
  \caption{The elements of $\mathcal{LB}_1$.}
  \label{fig:B1}
  \end{center}
\end{figure}

We start with the recursion for $\mathrm{LB}_n$.
Suppose that $n > 1$. Then consider where the label 1 can be in
an element of $\mathcal{LB}_n$. There are four cases to consider.
First, 1 could be the label of the left-most
element in which case the remaining labels must correspond to a linear extension
of $E_n$. Otherwise 1 is the label of the root of the $k^{th}$ binary
shrub for some $k =1, \ldots, n$. If $1< k < n$, then
there is no relation that is forced between
the labels to left of 1 which correspond to a linear extension  of
$B_{k-1}$ and the labels to the right of 1 which correspond to a linear extension
of $B_{n-k}$.  In the special case where $k=1$, the Hasse diagram
of the poset to the left of the node labeled 1 is just a 2 element
chain which we call $B_0$. Similarly, in
special case where $k=n$, the Hasse diagram
of the poset to the right of the node labeled 1 is just $B_0$. Clearly,
$\mathrm{LB}_0 =1$.  These four cases are pictured in
Figure \ref{fig:Bn}. For each $k =1, \ldots, n$,
we have $\binom{3n+1}{3(k-1)+2}$ ways to choose the labels of the
elements to the left of 1. It follows that
\begin{equation}\label{Bnrec}
\mathrm{LB}_n = \mathrm{LE}_n + \sum_{k=1}^n \binom{3n+1}{3(k-1)+2}
\mathrm{LB}_{k-1}\mathrm{LB}_{n-k}.
\end{equation}

\begin{figure}[htbp]
  \begin{center}
  \includegraphics[width=0.5\textwidth,height=3.5cm]{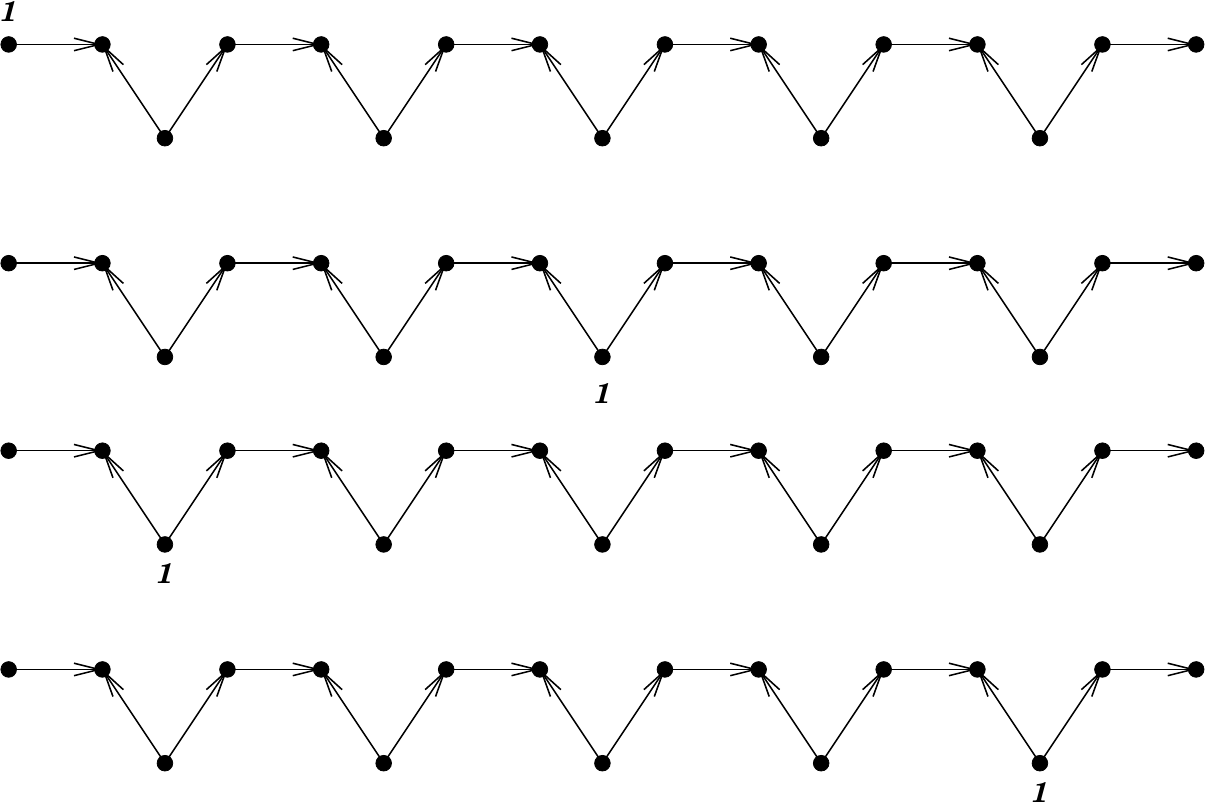}
  \caption{The recursion for $\mathrm{LB}_n$.}
  \label{fig:Bn}
  \end{center}
\end{figure}

Next consider the recursion for $\mathrm{LS}_n$.
Suppose that $n > 1$. Then consider where the label 1 can be in
an element of $\mathcal{LS}_n$. Again there are four cases to consider.
First, 1 could be the label of the left-most
element in which case the remaining labels must correspond to a linear extension
of $A_n$. Otherwise 1 is the label of the root of the $k^{th}$ binary
shrub for some $k =1, \ldots, n$. If $1< k < n$, then
there is no relation that is forced between
the labels to left of 1 which correspond to a linear extension  of
$B_{k-1}$ and the labels to the right of 1 which correspond to a linear extension
of $S_{n-k}$.  In the special case where $k=1$, the Hasse diagram
of the poset to the left of the node labeled 1 is just $B_0$. Similarly, in
special case where $k=n$, the Hasse diagram
of the poset to the right of the node labeled 1 is a one element
poset which we call $S_0$. Clearly,
$\mathrm{LS}_0 =1$.  These four cases are pictured in
Figure \ref{fig:Sn}. For each $k =1, \ldots, n$,
we have $\binom{3n}{3(k-1)+2}$ ways to choose the labels of the
elements to the left of 1. It follows that got $n \geq 2$,
\begin{equation}\label{Snrec}
\mathrm{LS}_n = \mathrm{LA}_n + \sum_{k=1}^n \binom{3n}{3(k-1)+2}
\mathrm{LB}_{k-1}\mathrm{LS}_{n-k}.
\end{equation}

\begin{figure}[htbp]
  \begin{center}
  \includegraphics[width=0.5\textwidth,height=3.5cm]{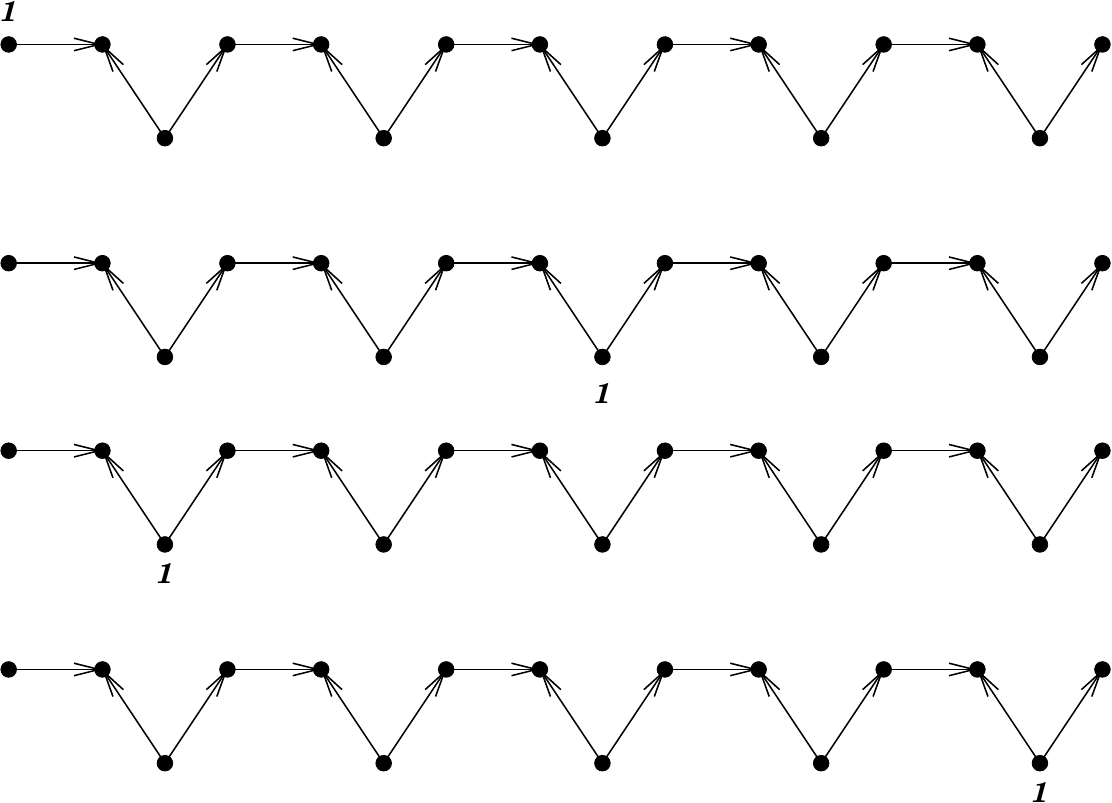}
  \caption{The recursion for $\mathrm{LS}_n$.}
  \label{fig:Sn}
  \end{center}
\end{figure}

Next consider the recursion for $\mathrm{LE}_n$.
Suppose that $n > 1$. Then consider where the label 1 can be in
an element of $\mathcal{LS}_n$. In this case,
there are three  cases to consider. That is,
 1 must be the label of the root of the $k^{th}$ binary
shrub for some $k =1, \ldots, n$. If $1< k < n$, then
there is no relation that is forced between
the labels to left of 1 which correspond to a linear extension  of
$E_{k-1}$ and the labels to the right of 1 which correspond to a linear extension
of $B_{n-k}$.  In the special case where $k=1$, the Hasse diagram
of the poset to the left of the node labeled 1 is just a one element
poset which we will also call $E_0$. Clearly,
$\mathrm{LE}_0 =1$. Similarly, in
special case where $k=n$, the Hasse diagram
of the poset to the right of the node labeled 1 is just $B_0$.
These three cases are pictured in
Figure \ref{fig:En}. For each $k =1, \ldots, n$,
we have $\binom{3n}{3(k-1)+1}$ ways to choose the labels of the
elements to the left of 1. It follows that
\begin{equation}\label{Enrec}
\mathrm{LE}_n = \sum_{k=1}^n \binom{3n}{3(k-1)+1}
\mathrm{LE}_{k-1}\mathrm{LB}_{n-k}.
\end{equation}

\begin{figure}[htbp]
  \begin{center}
  \includegraphics[width=0.5\textwidth,height=3cm]{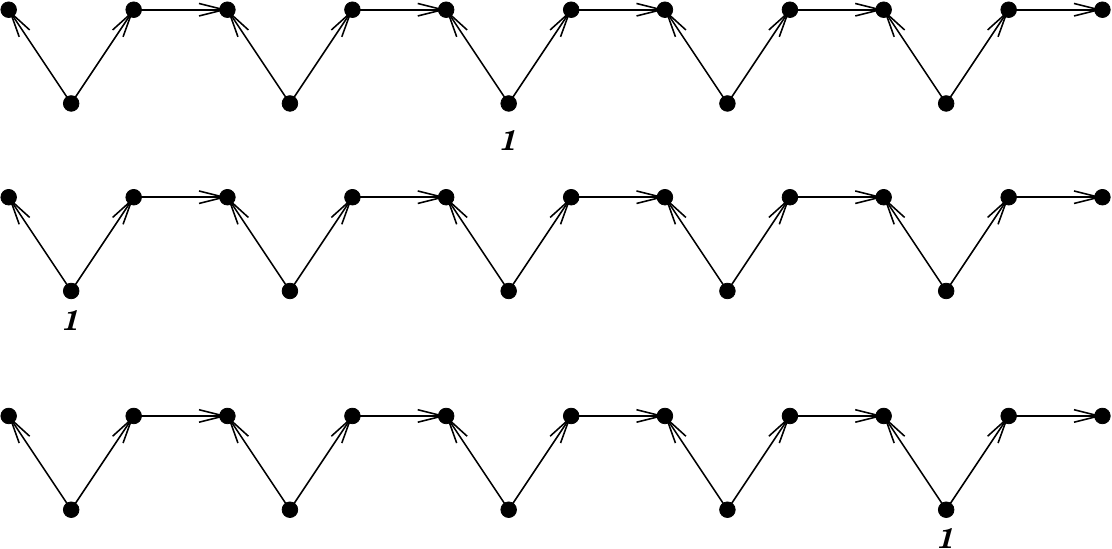}
  \caption{The recursion for $\mathrm{LE}_n$.}
  \label{fig:En}
  \end{center}
\end{figure}

Finally consider the recursion for $\mathrm{LA}_n$.
Now suppose that $n > 1$. Then consider where the label 1 can be in
an element of $\mathcal{LS}_n$. In this case,
there are three  cases to consider. That is,
 1 must be the label of the root of the $k^{th}$ binary
shrub for some $k =1, \ldots, n$. If $1< k < n$, then
there is no relation that is forced between
the labels to left of 1 which correspond to a linear extension  of
$E_{k-1}$ and the labels to the right of 1 which correspond to a linear extension
of $S_{n-k}$.  In the special case where $k=1$, the Hasse diagram
of the poset to the left of the node labeled 1 is $E_0$. Similarly, in
special case where $k=n$, the Hasse diagram
of the poset to the right of the node labeled 1 is just $S_0$.
These three cases are pictured in
Figure \ref{fig:An}. For each $k =1, \ldots, n$,
we have $\binom{3n-1}{3(k-1)+1}$ ways to choose the labels of the
elements to the left of 1. It follows that for $n \geq 2$,
\begin{equation}\label{Anrec}
\mathrm{LA}_n = \sum_{k=1}^n \binom{3n-1}{3(k-1)+1}
\mathrm{LE}_{k-1}\mathrm{LS}_{n-k}.
\end{equation}

\begin{figure}[htbp]
  \begin{center}
  \includegraphics[width=0.5\textwidth,height=3cm]{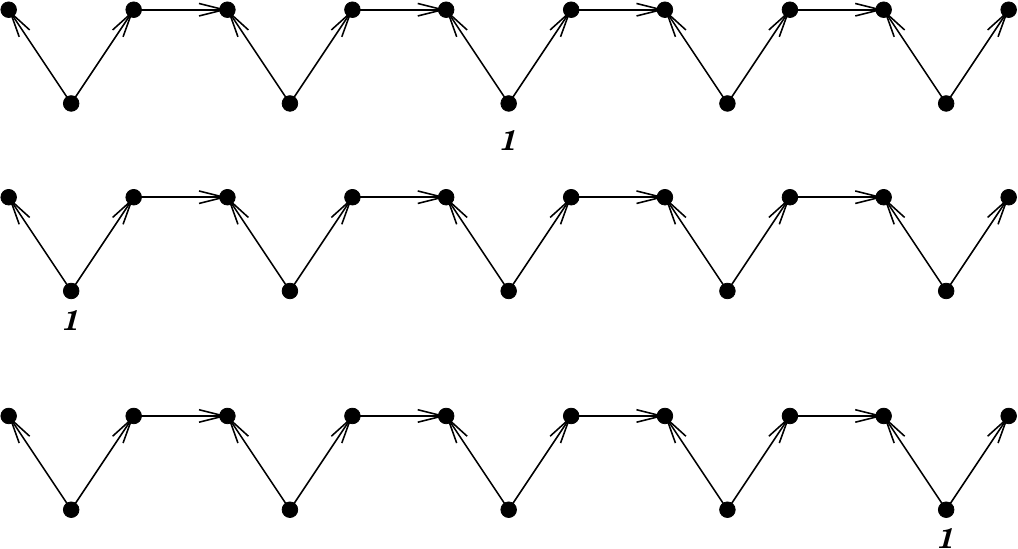}
  \caption{The recursion for $\mathrm{LA}_n$.}
  \label{fig:An}
  \end{center}
\end{figure}

One can check directly that (\ref{Bnrec}), (\ref{Snrec}),
(\ref{Enrec}), and (\ref{Anrec}) also hold for $n=1$.
By iterating these recursions, we can compute
the first few terms of the sequences
$(\mathrm{LA}_n)_{n \geq 0}$, $(\mathrm{LB}_n)_{n \geq 0}$,
$(\mathrm{LE}_n)_{n \geq 0}$, and $(\mathrm{LS}_n)_{n \geq 0}$.
For example, the first few terms of $(\mathrm{LA}_n)_{n \geq 0}$ are
\begin{eqnarray*}
&&1,2,40,3194,666160,287316122,222237912664,280180369563194,\\
&&537546603651987424,1490424231594917313242,5735930050702709579598280, \ldots
\end{eqnarray*}
The first few terms of $(\mathrm{LB}_n)_{n \geq 0}$ are
\begin{eqnarray*}
&&1,9,477,74601,25740261,16591655817,17929265150637,30098784753112329,\\
&&74180579084559895221,256937013876000351610089,1208025937371403268201735037, \ldots
\end{eqnarray*}
The first few terms of $(\mathrm{LE}_n)_{n \geq 0}$ are
\begin{eqnarray*}
 &&1,3,99,11259,3052323,1620265923,1488257158851,2172534146099019,\\
&&4736552519729393091,14708695606607601165843,62671742039942099631403299,
\ldots
\end{eqnarray*}
The first few terms of $(\mathrm{LS}_n)_{n \geq 0}$ are
\begin{eqnarray*}
&&1,5,169,19241,5216485,2769073949,2543467934449,3712914075133121,\\
&&8094884285992309261,25137521105896509819605,107107542395866078895709049
\ldots
\end{eqnarray*}
None of these sequences appear in the OEIS, see \cite{oeis}.

One can also study the generating functions
\begin{eqnarray*}
\mathcal{A}(t) &=& 1 + \sum_{n \geq 1} \frac{\mathrm{LA}_n t^{3n}}{(3n)!}, \\
\mathcal{E}(t) &=& \sum_{n \geq 0} \frac{\mathrm{LE}_n t^{3n+1}}{(3n+1)!}, \\
\mathcal{S}(t) &=& \sum_{n \geq 0} \frac{\mathrm{LS}_n t^{3n+1}}{(3n+1)!},
\ \mbox{and} \\
\mathcal{B}(t) &=& \sum_{n \geq 0} \frac{\mathrm{LB}_n t^{3n+2}}{(3n+2)!}.
\end{eqnarray*}
It is straightforward to show that the recursions (\ref{Bnrec}), (\ref{Enrec}),
(\ref{Snrec}), and (\ref{Anrec}) imply that the following differential
equations hold:
\begin{eqnarray*}
\mathcal{A}^\prime(t) &=& \mathcal{E}(t) \mathcal{S}(t), \\
\mathcal{E}^\prime(t) &=& 1 +\mathcal{E}(t) \mathcal{B}(t), \\
\mathcal{S}^\prime(t) &=& \mathcal{A}(t) + \mathcal{B}(t) \mathcal{S}(t),
\ \mbox{and} \\
\mathcal{B}^\prime(t) &=& t+ \mathcal{E}(t) + (\mathcal{B}(t))^2.
\end{eqnarray*}

Note that it follows from the last differential equation that
$$\mathcal{B}^\prime(t) - t - (\mathcal{B}(t))^2 = \mathcal{E}(t),$$
which can be plugged into the second differential equation to show that
\begin{equation}\label{Bdiffeq}
\mathcal{B}^{\prime\prime}(t) = 2+ 3 \mathcal{B}^\prime(t)
\mathcal{B}(t) -t \mathcal{B}(t) - (\mathcal{B}(t))^3.
\end{equation}
Thus in principle, we can obtain a recursion for
the $\mathrm{LB}_n$ in terms of $\mathrm{LB}_0, \ldots, \mathrm{LB}_{n-1}$
which in turn can lead to more direct recursions for
$\mathrm{LE}_n$,  $\mathrm{LS}_n$, and $\mathrm{LA}_n$. However,
all such  recursions are more complicated than the family of recursions
described above.

We used the initial terms of the sequence $(\mathrm{LA}_n)_{n \geq 0}$ to
compute the following initial terms of $\mathcal{RA}(x,t)$.

\begin{eqnarray*}
&&1+ 2 \frac{t^3}{3!} + 40 (1+x)\frac{t^6}{6!} +(3194+7052 x+3194 x^2)
\frac{t^9}{9!}+ \\
&& 880 \left(757+2603 x+2603 x^2+757 x^3\right) \frac{t^{12}}{12!}+ \\
&& 2 \left(143658061+671012156 x+1061347566 x^2+671012156 x^3+143658061 x^4\right) \frac{t^{15}}{15!}+  \\
&& 136 \left(1634102299+9646627503 x+21007526198 x^2+21007526198 x^3+\right.\\
&& \ \ \  \left. 9646627503 x^4+1634102299 x^5\right)\frac{t^{18}}{18!}
+ \cdots .
\end{eqnarray*}

\section{Conclusions}

In this paper, we computed the generating function of 5 different kinds of
rises in forests of binary shrubs. Our work can be viewed as the first step in studying
consecutive patterns in forests of binary shrubs. We will study such patterns
in a subsequent paper.

In addition, we can also study the analogues of
up-down permutations relative to $<_T$, $<_B$, $<_L$ and
$<_A$. For example, we say that an $F =(F_1, \ldots, F_n) \in
\mathcal{F}^2_n$ is an up-down forest of binary shrubs
with respect to the $<_{T}$ if
$RiseT(F)$ equals the set of odd numbers less than $n$. We also will
study such analogues of up-down permutations in a subsequent paper.

\acknowledgements
\label{sec:ack}
We sincerely thank the anonymous reviewers for valuable comments and their careful reading of our
paper, which were of great help in revising the manuscript.

\bibliographystyle{abbrvnat}
\bibliography{Final}
\label{sec:biblio}

\end{document}